\theoremstyle{definition}
\newtheorem{definition}{Definition}
\newtheorem{theorem}[definition]{Theorem}
\newtheorem{proposition}[definition]{Proposition}
\newtheorem{lemma}[definition]{Lemma}
\theoremstyle{remark}
\newtheorem{remark}[definition]{Remark}
\newtheorem{example}[definition]{Example}
\newcounter{enumctr}
\newcommand{\R}{\mathbb{R}}
\newcommand{\C}{\mathbb{C}}
\newcommand{\id}{\hbox{id}}
\renewcommand{\phi}{\varphi}
\providecommand{\keywords}[1]{\textbf{\textbf{Key words: }} #1}
\begin{document}
%
%
%
\title{\vspace*{-10mm}
On asymptotic properties of solutions to fractional differential equations}
\author{
N.D.~Cong\footnote{\tt ndcong@math.ac.vn, \rm Institute of Mathematics, Vietnam Academy of Science and Technology, 18 Hoang Quoc Viet, 10307 Ha Noi, Viet Nam.}, \;
%
%
%
H.T.~Tuan\footnote{\tt httuan@math.ac.vn, \rm Institute of Mathematics, Vietnam Academy of Science and Technology, 18 Hoang Quoc Viet, 10307 Ha Noi, Viet Nam.}
\;\;and\;\;
Hieu Trinh\footnote{\tt  hieu.trinh@deakin.edu.au, \rm School of Engineering, Faculty of Science Engineering and Built Environment, Deakin University, Geelong, VIC 3217, Australia}}
\maketitle
\begin{abstract}
We present some distinct asymptotic properties of solutions to Caputo fractional differential equations (FDEs). First, we show that the non-trivial solutions to a FDE can not converge to the fixed points faster than $t^{-\alpha}$, where $\alpha$ is the order of the FDE. Then, we introduce the notion of Mittag-Leffler stability which is suitable for systems of fractional-order. Next, we use this notion to describe the asymptotical behavior of solutions to FDEs by two approaches: Lyapunov's first method and Lyapunov's second method. Finally, we give a discussion on the relation between Lipschitz condition, stability and speed of decay, separation of trajectories to scalar FDEs.
\end{abstract}
%
%
\keywords{\emph{Fractional calculus, Caputo derivative, Fractional differential equation, Comparison principle, Lyapunov's first method, Lyapunov's second method, Asymptotic behavior, Stability, Asymptotic stability, Mittag-Leffler stability.}}

{\it 2010 Mathematics Subject Classification:} {\small 26A33, 47.85, 34A08, 34D05, 34D20.}
\section{Introduction}
Fractional derivatives provide an excellent instrument for the description of memory and hereditary properties of various materials and  processes. This is the main advantage of fractional derivatives in comparison  with classical integer-order models, in which such effects are in fact  neglected \cite{Podlubny}. Therefore, during  the  last  decades, fractional  calculus  has  been  applied  to  almost  every  field  of  science, engineering, and mathematics.  The areas where fractional calculus  has made a profound impact include viscoelasticity and rheology, electrical  engineering,   electrochemistry,   biology,   biophysics  and  bioengineering, signal and image processing, mechanics, mechatronics, physics, and control  theory. For more details we refer the reader the monographs \cite{OS_74, Podlubny, BK_15, Kai, KST_06, LLD_09, SKM} and the references therein.  The mathematical modeling and simulation of systems and  processes, based on the description of their properties in terms of fractional derivatives, naturally leads to differential equations of fractional order and to the necessity to solve such equations. However, most of the fractional differential equations used to describe practical problems can not be solved explicitly. 

Many important problems  of the qualitative theory to fractional-order systems deal with asymptotic properties of solutions what happens after a long period of time. If a particular solution is well understood, it is natural to ask next whether a small change in the initial condition will lead to similar behavior. Stability theory addresses the following questions: how do the trajectories of solutions change under small perturbations? will the solutions starting near to a given equilibrium point converge to that equilibrium point, and, if yes, with what rate of convergence?

In his seminal 1892 thesis [L92] Lyapunov proposed two main methods  for investigation asymptotic properties of solution of ODEs as  follows.\smallskip

\noindent $\bullet$ Lyapunov's first method (reduction method):  the key feature of this method is that one reduces the  original problem to a much simpler one---linearization of the nonlinear equation near an equilibrium point. Then the stability of the resulting linearized equation can be solved and used for deducting the asymptotic properties of the original equation.\smallskip

\noindent $\bullet$ Lyapunov's second method (direct method): use the action of the system on a specific function (called Lyapunov function) to deduct the asymptotic properties of the system without the need to solve the system explicitly.\smallskip

The two Lyapunov's methods proved to be powerful tools in the classical theory of ordinary differential equations, which help the research and understanding the behavior of solutions of a system of ordinary differential equations (ODEs). It is natural to expect that the Lyapunov's methods may work for FDEs as well since the fractional order systems are  generalizations of integer-order systems. However, one should take care of many distinct features of "purely" fractional-order systems, especially the nonlocal property and the long memory of the system.

The first work concerning with Lyapunov's first method for fractional-order systems is the paper \cite{A_07}. Using linearization, the authors proposed a criterion to test the stability of a fractional-order predator-prey model and a fractional-order rabies model. They also examined these results by a numerical example. However, no rigorous  mathematical proof is given in that paper.  After that in \cite{WWL,CCWY_12,ZTC_15}, the authors formulated theorems on linearized stability. Unfortunately,  as showed in \cite[Remark 3.7]{CST_16}, these papers contain some serious flaws in the proof of the linearization theorems. Using other tools, the authors of \cite{CST_16} improved the assertions presented in the papers mentioned above and gave a powerful stability criterion. However,  the convergence rate of solutions to an equilibrium point is still unavailable.

Although several results of the Lyapunov's second method for fractional-order nonlinear systems have been published. We list here some typical contributions \cite{A_17,CDSZ_17,CMC_14,DQW_14,Li_1,Li_2, MCG_15,YC_16,Zhou}. However, the development of this theory is still in its infancy and requires further investigation. One of the reasons for this might be that computation and estimation of fractional derivatives of Lyapunov candidate functions are very complicated due to the fact that the well-known Leibniz rule does not hold true for such derivatives. On the other hand, in contrast to classical derivative, there is no an acceptable geometrical or physical interpretation of fractional derivatives. To the best of our knowledge, the common strategy in study the stability of FDEs by Lyapunov's second method as follows. The authors combined effective fractional derivative inequalities  \cite[inequalities (6) and (16)]{CMC_14}, \cite[inequality (24)]{MCG_15}, \cite[inequality (10)]{CDSZ_17} and mains results in \cite{Li_1,Li_2} to obtain the estimation of solutions to FDEs. However,  in those papers there are some shortcomings of that approach and some flaws in the proofs, which were shown in \cite{Tuan_Hieu}.
Recently, using other tools the authors of  \cite{Tuan_Hieu} were able to avoid the shortcomings and flaws mentioned above and proposed a rigorous method of fractional Lyapunov candidate functions to study the weakly asymptotical stability for FDEs.

In this paper, we will develop a framework to study the asymptotical behavior of solution to FDEs by two directions: Lyapunov's first method and Lyapunov's second method. We will improve the existing results and solve the open problems in the literature.

The rest of the paper is organized as follows. In Section \ref{pre}, some important notions and elementary results concerning with fractional calculus and FDEs are recalled. In Section \ref{sec.power.rate}, we first show that every nontrivial solution of a FDE having Lipschitz continuous "vector field" does not converge to the equilibrium point of the FDE with a rate faster than $t^{-\alpha}$ where $\alpha$ is the order of the equation. Then, based on the role of Mittag-leffler function in expression of solution to linear FDEs and its asymptotical behavior, we introduce the notion of Mittag-Leffler stability to characterize the decay rate of solution to FDEs around fixed points of the "vector field". The suitability and usefulness of this definition will be specified in the next sections. In Section \ref{first_method}, we develop a Lyapunov's first method for a FDE linearized around its equilibrium points. Our strategy is to combine  a variation of constant formula, properties of Mittag-Leffler function, Lyapunov--Perron approach and a new weighted norm  to obtain the Mittag-Leffler stability of fixed points. Using comparison principles, a characterization of functions having fractional derivative and an inequality
concerning with fractional derivative of convex functions, in Section \ref{sec.second-Lyapunov} we develop a Lyapunov's second method for FDEs. Some examples are also presented to illustrate for the theoretical result. Finally, in Section \ref{final_sec} we discuss relations between Lipschitz condition, stability and speed of decay, separation of trajectories to FDEs. In particular, we give an example showing that Mittag-Leffler stability is strictly stronger than asymptotic stability, and another example showing that without Lipschitz condition we may encounter the non-uniqueness of solutions to FDE and that alone may lead to instability of the equilibrium point although almost all solutions tend to the equilibrium point with a power rate. At the end of this section, we prove a distinct property of solution to scalar FDEs in comparison to solution of general higher dimensional FDEs: two trajectories starting from two different initial conditions do not intersect.

To conclude this part we introduce notations which are used through the paper. Denote by $\R$, $\R_{\geq 0}$ and $\C$ the set of real numbers, non-negative numbers and complex numbers, respectively. For some arbitrary positive constant integer $d$, let $\R^d$ and $\C^d$ be the $d$-dimensional Euclidean spaces with the scalar product $\langle \cdot,\cdot \rangle$ and the norm $\|\cdot\|$. For a Banach space $X$ with the norm $\|\cdot\|$, $x\in X$ and $r>0$, let $B_X(x,r)$ be the closed ball with the center at $x$ and the radius $r>0$. For some $T>0$, denote by $C([0,T],X)$ the linear space of continuous functions $\varphi:[0,T]\rightarrow X$ and by $C_\infty([0,T],X)$ the  normed space of $C([0,T],X)$ equipped with the norm
\[
\|\varphi\|_\infty:=\sup_{t\in [0,T]}\|\varphi(t)\|<\infty
\] 
for any $\varphi\in C_\infty([0,T],X)$. It is obvious that $(C_\infty([0,T],X),\|\cdot\|_\infty)$ is a Banach space. Finally, for $\alpha \in (0,1]$ we mean $\mathcal{H}^\alpha([0, T],\R^d)$ the standard H\"older space consisting of
functions $v\in C([0, T],\R^d)$ such that
\[
\|v\|_{\mathcal{H}^\alpha}:=\max_{0\leq t\leq T}\|v(t)\|+\sup_{0\leq s<t\leq T}\frac{\|v(t)-v(s)\|}{(t-s)^\alpha}<\infty
\]
and by $\mathcal{H}^\alpha_0([0,T],\R^d)$ the closed subspace of $\mathcal{H}^\alpha([0, T],\R^d)$ consisting of functions $v \in \mathcal{H}^\alpha([0, T],\R^d)$ such that
\[
\sup_{0\leq s<t\leq T,t-s\leq \varepsilon}\frac{\|v(t)-v(s)\|}{(t-s)^\alpha}\to 0\quad \text{as}\;\varepsilon\to 0.
\]
\section{Preliminaries}\label{pre}
We recall briefly important notions of fractional calculus and some fundamental results concerning with fractional differential equations.

Let $\alpha\in (0,1)$, $[0,T]\subset \R$ and $x:[0,T]\rightarrow \R$ satisfy $\int_0^T|x(\tau)|\;d\tau<\infty$. Then, the \emph{Riemann--Liouville integral of order $\alpha$} is defined by
\[
I_{0+}^{\alpha}x(t):=\frac{1}{\Gamma(\alpha)}\int_0^t(t-\tau)^{\alpha-1}x(\tau)\;d\tau\quad \hbox{ for } t\in (0,T],
\]
where the Gamma function $\Gamma:(0,\infty)\rightarrow \R$ is defined as
\[
\Gamma(\alpha):=\int_0^\infty \tau^{\alpha-1}\exp(-\tau)\;d\tau,
\]
see e.g., Diethelm \cite{Kai}. The \emph{Riemann--Liouville derivative of fractional-order $\alpha$} is given by
\[
^{R}D_{0+}^\alpha x(t):=(D I_{0+}^{1-\alpha}x)(t) \quad\forall t\in (0,T],
\]
where $D=\frac{d}{dt}$ is the classical derivative. In the case the Riemann--Liouville derivative of $x(\cdot)$ exists, the \emph{Caputo fractional derivative} $^{C\!}D_{0+}^\alpha x$ of this function is defined by
\[
^{C\!}D_{0+}^\alpha x(t):=^{R\!}D_{0+}^\alpha(x(t)-x(0)),\qquad \hbox{ for } t\in (0,T],
\]
see \cite[Definition 3.2, pp. 50]{Kai}. The Caputo fractional derivative of a $d$-dimensional vector function $x(t)=(x_1(t),\dots,x_d(t))^{T}$ is defined component-wise as $$^{C\!}D^\alpha_{0+}x(t)=(^{C\!}D^\alpha_{0+}x_1(t),\dots,^{C\!}D^\alpha_{0+}x_d(t))^{T}.$$

Denote by $I^\alpha_{0+}C([0,T],\R^d)$ the space of functions $\varphi:[0,T]\rightarrow \R^d$ such that there exists a function $\psi\in C([0,T],\R^d)$ satisfying $\varphi=I^\alpha_{0+}\psi$. Due to \cite[Theorem 5.2, pp. 475]{Vainikko}, we have the following characterization of functions having Caputo fractional derivative.
\begin{theorem}\label{Theorem1}
For $\alpha\in (0,1)$ and a function $v\in C([0,T],\R^d)$, the following conditions (i), (ii), (iii)  are equivalent:
\begin{itemize}
\item[(i)] the fractional derivative $^{C}D^\alpha_{0+}v\in C([0,T],\R^d)$ exists;
\item[(ii)] a finite limit $\lim_{t\to 0}\frac{v(t)-v(0)}{t^\alpha}:=\gamma$ exists, and
\[\sup_{0<t\leq T}\left\|\int_{\theta t}^t\frac{v(t)-v(\tau)}{(t-\tau)^{\alpha+1}}\;d\tau\right\|\to 0\quad \text{as}\; \theta\to 1;\]
\item[(iii)] $v$ has the structure $v-v(0)=t^\alpha \gamma+v_0$, where $\gamma$ is a constant vector, $v_0\in \mathcal{H}_0^\alpha ([0,T],\R^d)$, and $\int_0^t (t-\tau)^{-\alpha-1} (v(t)-v(\tau))d\tau=:w(t)$ converges for every $t\in (0,T]$ defining a function $w\in C((0,T],\R^d)$ which has a finite limit $\lim_{t\to 0}w(t)=:w(0)$.
\end{itemize}
For $v\in C([0,T],\R^d)$ having fractional derivative $^{C}D^\alpha_{0+}v\in C([0,T],\R^d)$, it holds $^{C}D^\alpha_{0+}v(0)=\Gamma(\alpha+1)\gamma$, and
\begin{align*}
^{C}D^\alpha_{0+}v(t)&=\frac{v(t)-v(0)}{\Gamma(1-\alpha)t^\alpha}\\
&\hspace{1cm}+\frac{\alpha}{\Gamma(1-\alpha)}\int_0^t \frac{v(t)-v(\tau)}{(t-\tau)^{\alpha+1}}d\tau,\quad 0<t\leq T.
\end{align*}
\end{theorem}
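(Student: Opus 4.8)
The plan is to route every implication through the single reduction
\[
\text{(i)} \iff v(\cdot)-v(0)\in I_{0+}^{\alpha}C([0,T],\R^d),
\]
in which case $^{C}D^{\alpha}_{0+}v$ coincides with the unique $g\in C([0,T],\R^d)$ for which $v-v(0)=I_{0+}^{\alpha}g$. This follows from the definition $^{C}D^{\alpha}_{0+}v=D\,I_{0+}^{1-\alpha}(v-v(0))$ together with three standard facts on the Riemann--Liouville operators on continuous functions: $I_{0+}^{1-\alpha}f$ is bounded and vanishes at $t=0$ whenever $f$ is bounded continuous; the semigroup law $I_{0+}^{\beta}I_{0+}^{\gamma}=I_{0+}^{\beta+\gamma}$ holds; and $^{R}D^{\alpha}_{0+}$ is a left inverse of $I_{0+}^{\alpha}$ on $C([0,T],\R^d)$. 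Indeed, if $^{C}D^{\alpha}_{0+}v=g$ is continuous, then $I_{0+}^{1-\alpha}(v-v(0))$ is an antiderivative of $g$ vanishing at $0$, hence equals $I_{0+}^{1}g$; applying $I_{0+}^{\alpha}$, using the semigroup law and differentiating gives $v-v(0)=I_{0+}^{\alpha}g$. Conversely $^{C}D^{\alpha}_{0+}I_{0+}^{\alpha}g={}^{R}D^{\alpha}_{0+}I_{0+}^{\alpha}g=g\in C$.

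Next I would prove (i) $\Rightarrow$ (iii). Put $u:=v-v(0)=I_{0+}^{\alpha}g$ and split $g=g(0)+\tilde g$ with $\tilde g\in C$, $\tilde g(0)=0$. Since $I_{0+}^{\alpha}$ of the constant $g(0)$ is $\frac{g(0)}{\Gamma(\alpha+1)}t^{\alpha}$, setting $\gamma:=\frac{g(0)}{\Gamma(\alpha+1)}$ and $v_0:=I_{0+}^{\alpha}\tilde g$ yields $v-v(0)=t^{\alpha}\gamma+v_0$; a short computation with the substitution $\tau=t\rho$ shows $u(t)/t^{\alpha}\to\gamma$, giving also $^{C}D^{\alpha}_{0+}v(0)=g(0)=\Gamma(\alpha+1)\gamma$. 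The membership $v_0\in\mathcal{H}_0^{\alpha}([0,T],\R^d)$ is the mapping property $I_{0+}^{\alpha}\big(\{h\in C:h(0)=0\}\big)\subset\mathcal{H}_0^{\alpha}$, which I would prove by splitting $I_{0+}^{\alpha}h(t)-I_{0+}^{\alpha}h(s)$ (for $s<t$) into the integral over $[s,t]$ and the integral over $[0,s]$ and estimating each by $\sup|h|$ and moduli of continuity of $h$ that shrink with $t-s$ precisely because $h(0)=0$. To produce $w$ and the displayed formula, the key is the Marchaud-type identity
\[
^{C}D^{\alpha}_{0+}v(t)=\frac{v(t)-v(0)}{\Gamma(1-\alpha)t^{\alpha}}+\frac{\alpha}{\Gamma(1-\alpha)}\int_0^t\frac{v(t)-v(\tau)}{(t-\tau)^{\alpha+1}}\,d\tau,\qquad 0<t\le T,
\]
valid for every $v$ with $v-v(0)=I_{0+}^{\alpha}g$, $g\in C$. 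Granting it, $w(t)=\frac{\Gamma(1-\alpha)}{\alpha}\big(g(t)-\frac{v(t)-v(0)}{\Gamma(1-\alpha)t^{\alpha}}\big)$ converges, is continuous on $(0,T]$, and tends to $\frac{\Gamma(1-\alpha)}{\alpha}g(0)-\frac{\gamma}{\alpha}$ as $t\to0$, which is exactly (iii). Establishing this identity for merely $g\in C$ (so $v$ need not be $C^1$) is the main obstacle: I would obtain it either by approximating $g$ uniformly by smooth functions and controlling the boundary terms of the integration by parts uniformly, or by inserting $u=I_{0+}^{\alpha}g$ into the right-hand side and evaluating the resulting iterated integral through Fubini and Beta-function identities, the two singularities at $\tau=0$ and $\tau=t$ being the source of the difficulty.

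For (iii) $\Rightarrow$ (ii): from $v(t)-v(0)=t^{\alpha}\gamma+v_0(t)$ with $v_0(0)=0$ and $v_0\in\mathcal{H}_0^{\alpha}$ one gets $\|v_0(t)\|/t^{\alpha}\to0$, hence $\lim_{t\to0}(v(t)-v(0))/t^{\alpha}=\gamma$. For the tail condition, decompose $\int_{\theta t}^{t}\frac{v(t)-v(\tau)}{(t-\tau)^{\alpha+1}}\,d\tau$ into the contribution of $t^{\alpha}\gamma$, namely $\gamma\int_{\theta t}^{t}\frac{t^{\alpha}-\tau^{\alpha}}{(t-\tau)^{\alpha+1}}\,d\tau$, and that of $v_0$. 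Using $t^{\alpha}-\tau^{\alpha}\le\alpha t^{\alpha-1}(t-\tau)$ on $[\theta t,t]$, the first is $\le\frac{\alpha\|\gamma\|}{1-\alpha}(1-\theta)^{1-\alpha}$, independent of $t$, hence $\to0$ as $\theta\to1$; the second tends to $0$ uniformly in $t$ using the convergence of $w$ in (iii) together with the modulus control from $v_0\in\mathcal{H}_0^{\alpha}$, which I would make uniform by treating $t\in(0,\delta]$ and $t\in[\delta,T]$ separately. Finally, (ii) $\Rightarrow$ (i): define $g$ by the right-hand side of the Marchaud identity for $t>0$ and $g(0):=\Gamma(\alpha+1)\gamma$; condition (ii) yields continuity of $g$ on $[0,T]$, and one then checks $v-v(0)=I_{0+}^{\alpha}g$ by verifying that both sides have Riemann--Liouville derivative $g$ and value $0$ at $t=0$, which closes the cycle and simultaneously records the claimed representation of $^{C}D^{\alpha}_{0+}v$ and the boundary value $^{C}D^{\alpha}_{0+}v(0)=\Gamma(\alpha+1)\gamma$.
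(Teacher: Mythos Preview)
The paper does not prove this theorem at all: it is stated with the sentence ``Due to \cite[Theorem 5.2, pp.~475]{Vainikko}, we have the following characterization\ldots'' and no argument is given. So there is no proof in the paper to compare against; your sketch is an attempt at a self-contained proof of Vainikko's result, and the relevant question is whether it stands on its own.

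Most of the plan is sound and is indeed the natural route: the equivalence (i) $\Leftrightarrow v-v(0)\in I_{0+}^{\alpha}C$, the splitting $g=g(0)+\tilde g$ to get the structure in (iii), the mapping property $I_{0+}^{\alpha}(\{h\in C:h(0)=0\})\subset\mathcal{H}_0^{\alpha}$, and the Marchaud representation are exactly the ingredients one needs. However, your closing step (ii) $\Rightarrow$ (i) has a genuine circularity. You propose to check $v-v(0)=I_{0+}^{\alpha}g$ ``by verifying that both sides have Riemann--Liouville derivative $g$ and value $0$ at $t=0$''. But showing that $v-v(0)$ has Riemann--Liouville derivative $g$ is precisely the statement $^{C}D_{0+}^{\alpha}v=g\in C$, which is (i); you are assuming what you want to prove. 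The non-circular way to close the loop is the direct computation you already alluded to in the other direction: insert the Marchaud expression for $g$ into $I_{0+}^{\alpha}g$ and evaluate via Fubini and Beta identities to recover $v-v(0)$, or else show directly that $I_{0+}^{1-\alpha}(v-v(0))$ is $C^{1}$ with derivative $g$ by differentiating the convolution and using the uniform tail condition in (ii). A second, smaller gap is the claim ``condition (ii) yields continuity of $g$ on $[0,T]$'': continuity on $(0,T]$ does follow from the uniform tail condition (uniform limit of continuous truncations), but the limit $g(t)\to\Gamma(\alpha+1)\gamma$ as $t\to0$ requires a separate argument in which you decompose $v(t)-v(\tau)$ into the $t^{\alpha}\gamma$ part (whose contribution is an explicit constant after the substitution $\tau=t\rho$) and a remainder controlled by $(v(t)-v(0))/t^{\alpha}-\gamma\to0$. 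None of this is hard, but it is not automatic from (ii) as written.
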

Let $x_0\in \R^d$, $K>0$, $G:=\{(t,x):0\leq t\leq T,\; \|y-x_0\|\leq K\}$ and $f:G\rightarrow \R^d$ is a continuous. Consider a initial value problem of order $\alpha$ in the form
\begin{align}
\label{p_1}^{C}D^\alpha_{0+}x(t)&=f(t,x(t)),\quad t>0,\\
\label{p_2}x(0)&=x_0
\end{align}
Using Theorem \ref{Theorem1} and the arguments as in \cite[Lemma 6.2, p. 86]{Kai} we obtain the following result.
\begin{lemma}\label{equi_l}
 A function $y\in B_{C([0,T],\R^d)}(x_0,K)$ is a solution of  the problem \eqref{p_1}-\eqref{p_2} if and only if it satisfies the Volterra integral equation
\[
y(t)=x_0+\frac{1}{\Gamma(\alpha)}\int_0^t (t-\tau)^{\alpha-1}f(\tau,y(\tau))\;d\tau,\quad t\in [0,T].
\]
\end{lemma}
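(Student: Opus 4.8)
The plan is to prove the two implications separately, in each case moving between the problem \eqref{p_1}--\eqref{p_2} and the Volterra equation by means of the two composition identities $I^{\alpha}_{0+}\,{}^{C}D^{\alpha}_{0+}y = y-y(0)$ and ${}^{C}D^{\alpha}_{0+}\,I^{\alpha}_{0+}g = g$, which play the role here of the fundamental theorem of calculus. Throughout, the continuity of $f$ together with $y\in C([0,T],\R^d)$ ensures that $g(t):=f(t,y(t))$ lies in $C([0,T],\R^d)$, and Theorem \ref{Theorem1} is what supplies the regularity statements that make the manipulations rigorous in the absence of any Lipschitz assumption. I will freely use the elementary facts that $I^{\beta}_{0+}$ satisfies the semigroup law $I^{\beta}_{0+}I^{\gamma}_{0+}=I^{\beta+\gamma}_{0+}$ on $C([0,T],\R^d)$, that ${}^{R}D^{\beta}_{0+}I^{\beta}_{0+}$ is the identity on $L^1$, and that $I^{\beta}_{0+}$ maps a bounded function to one of size $O(t^{\beta})$, hence one vanishing at $t=0$.

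For the direction ``solution $\Rightarrow$ Volterra equation'': if $y$ solves \eqref{p_1}--\eqref{p_2}, then ${}^{C}D^{\alpha}_{0+}y$ exists and equals the continuous function $g$; since by definition ${}^{C}D^{\alpha}_{0+}y = D\,I^{1-\alpha}_{0+}(y-y(0))$, Theorem \ref{Theorem1} guarantees that $I^{1-\alpha}_{0+}(y-y(0))$ is continuous on $[0,T]$, is $C^{1}$ on $(0,T]$ with derivative $g$, and vanishes at $t=0$ (the last point because $y-y(0)$ is bounded). Integrating from $0$ to $t$ gives $I^{1-\alpha}_{0+}(y-y(0)) = I^{1}_{0+}g$; applying ${}^{R}D^{1-\alpha}_{0+}=D\,I^{\alpha}_{0+}$ to both sides, the left-hand side collapses to $y-y(0)$, while the right-hand side becomes $D\,I^{1+\alpha}_{0+}g = I^{\alpha}_{0+}g$. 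Together with $y(0)=x_0$ this is exactly the asserted integral equation.

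For the converse: if $y\in B_{C([0,T],\R^d)}(x_0,K)$ satisfies the Volterra equation, then $y-x_0 = I^{\alpha}_{0+}g$ with $g\in C([0,T],\R^d)$, so $y-y(0)\in I^{\alpha}_{0+}C([0,T],\R^d)$ and Theorem \ref{Theorem1} yields that ${}^{C}D^{\alpha}_{0+}y$ exists and is continuous. Since $(I^{\alpha}_{0+}g)(0)=0$ we have $y-y(0)=I^{\alpha}_{0+}g$, whence ${}^{C}D^{\alpha}_{0+}y = {}^{R}D^{\alpha}_{0+}I^{\alpha}_{0+}g = D\,I^{1-\alpha}_{0+}I^{\alpha}_{0+}g = D\,I^{1}_{0+}g = g$, that is, ${}^{C}D^{\alpha}_{0+}y(t)=f(t,y(t))$; evaluating the Volterra equation at $t=0$ gives $y(0)=x_0$, so $y$ solves \eqref{p_1}--\eqref{p_2}.

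I expect no serious obstacle: the only genuinely delicate points are the two regularity claims --- that $I^{1-\alpha}_{0+}(y-y(0))$ is truly $C^{1}$ up to its boundary value in the first part, and that the Caputo derivative of $y$ exists as a continuous function in the second --- and these are precisely what Theorem \ref{Theorem1} and the definition of $I^{\alpha}_{0+}C([0,T],\R^d)$ are tailored to deliver; once they are in hand the lemma is just the Riemann--Liouville bookkeeping, as in \cite[Lemma 6.2, p.~86]{Kai}. A more computational alternative for the first implication would be to apply $I^{\alpha}_{0+}$ directly to the explicit formula for ${}^{C}D^{\alpha}_{0+}y$ in Theorem \ref{Theorem1} and carry out a Fubini interchange, but the operator-identity route above is cleaner.
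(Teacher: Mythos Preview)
Your proof is correct and follows exactly the route the paper indicates: the paper does not give a detailed argument but merely points to Theorem \ref{Theorem1} together with \cite[Lemma 6.2, p.~86]{Kai}, and what you have written is precisely a fleshed-out version of that combination --- the composition identities from Diethelm's argument, with Theorem \ref{Theorem1} invoked to supply the regularity that makes the formal manipulations legitimate in the merely-continuous (non-Lipschitz) setting.
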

Using this lemma one derive the following results on existence and uniqueness of solution to the problem \eqref{p_1}-\eqref{p_2}.
\begin{theorem}[Local existence]\label{exist_result_1}
There exists $T_b(x_0)\in (0,T)$ such that the problem \eqref{p_1}-\eqref{p_2} has a solution $\varphi(\cdot,x_0)\in C([0,T_b(x_0)],\R^d)$. Moreover, for any $0\leq t\leq T_b(x_0)$ we have $(t,\varphi(t,x_0))\in G$.
\end{theorem}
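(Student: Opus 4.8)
The plan is to reformulate the initial value problem, via Lemma~\ref{equi_l}, as a fixed point equation for the Volterra operator, and then to invoke Schauder's fixed point theorem --- the natural tool here since $f$ is merely continuous, not Lipschitz. Since $G$ is compact and $f$ is continuous, set $M:=1+\sup_{(t,x)\in G}\|f(t,x)\|<\infty$ and choose
\[
T_b(x_0):=\min\left\{\frac{T}{2},\ \left(\frac{K\,\Gamma(\alpha+1)}{M}\right)^{1/\alpha}\right\}\in(0,T).
\]
Writing $\mathcal B:=B_{C([0,T_b(x_0)],\R^d)}(x_0,K)$, define $\mathcal T\colon\mathcal B\to C([0,T_b(x_0)],\R^d)$ by
\[
(\mathcal Ty)(t):=x_0+\frac{1}{\Gamma(\alpha)}\int_0^t(t-\tau)^{\alpha-1}f(\tau,y(\tau))\,d\tau;
\]
this is well defined because $(\tau,y(\tau))\in G$ for all $\tau\in[0,T_b(x_0)]$ whenever $y\in\mathcal B$, so the integrand is bounded by $M$ and the kernel $(t-\tau)^{\alpha-1}$ is integrable. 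By Lemma~\ref{equi_l}, a fixed point of $\mathcal T$ in $\mathcal B$ is precisely a solution of \eqref{p_1}--\eqref{p_2} lying in $\mathcal B$, which also yields the ``moreover'' assertion for free.

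Next I would verify the hypotheses of Schauder's theorem on the nonempty, closed, bounded, convex set $\mathcal B$. \emph{Self-mapping}: for $y\in\mathcal B$ and $t\le T_b(x_0)$, $\|(\mathcal Ty)(t)-x_0\|\le \frac{M}{\Gamma(\alpha)}\int_0^t(t-\tau)^{\alpha-1}\,d\tau=\frac{M\,t^\alpha}{\Gamma(\alpha+1)}\le\frac{M\,T_b(x_0)^\alpha}{\Gamma(\alpha+1)}\le K$ by the choice of $T_b(x_0)$, so $\mathcal T(\mathcal B)\subseteq\mathcal B$. \emph{Continuity}: if $y_n\to y$ uniformly in $\mathcal B$, then uniform continuity of $f$ on $G$ gives $f(\cdot,y_n(\cdot))\to f(\cdot,y(\cdot))$ uniformly, whence $\mathcal Ty_n\to\mathcal Ty$ uniformly. \emph{Relative compactness of $\mathcal T(\mathcal B)$} via Arzel\`a--Ascoli: uniform boundedness is already shown; for equicontinuity take $0\le t_1<t_2\le T_b(x_0)$ and split
\[
\|(\mathcal Ty)(t_2)-(\mathcal Ty)(t_1)\|\le\frac{M}{\Gamma(\alpha)}\left(\int_0^{t_1}\bigl[(t_1-\tau)^{\alpha-1}-(t_2-\tau)^{\alpha-1}\bigr]d\tau+\int_{t_1}^{t_2}(t_2-\tau)^{\alpha-1}\,d\tau\right),
\]
where I used that $s\mapsto s^{\alpha-1}$ is decreasing (as $\alpha<1$) to remove the absolute value. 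Evaluating the two elementary integrals bounds the right-hand side by $\frac{M}{\Gamma(\alpha+1)}\bigl(t_1^\alpha-t_2^\alpha+2(t_2-t_1)^\alpha\bigr)\le\frac{2M}{\Gamma(\alpha+1)}(t_2-t_1)^\alpha$, which tends to $0$ as $t_2-t_1\to0$, uniformly in $y\in\mathcal B$.

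By Schauder's fixed point theorem, $\mathcal T$ has a fixed point $\varphi(\cdot,x_0)\in\mathcal B\subseteq C([0,T_b(x_0)],\R^d)$, which by Lemma~\ref{equi_l} solves \eqref{p_1}--\eqref{p_2}, and since $\varphi(\cdot,x_0)\in\mathcal B$ we have $\|\varphi(t,x_0)-x_0\|\le K$, hence $(t,\varphi(t,x_0))\in G$ for all $0\le t\le T_b(x_0)$. The only point requiring genuine care is the equicontinuity of $\mathcal T(\mathcal B)$: because the kernel $(t-\tau)^{\alpha-1}$ is weakly singular near $\tau=t$, one cannot estimate $(\mathcal Ty)(t_2)-(\mathcal Ty)(t_1)$ by differentiating under the integral and must instead use the splitting above together with the monotonicity of $s\mapsto s^{\alpha-1}$; everything else is a routine consequence of compactness of $G$ and continuity of $f$. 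This is, in essence, the fractional analogue of the Peano existence theorem; cf.\ \cite[Theorem 6.1]{Kai}.
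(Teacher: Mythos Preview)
Your proof is correct and follows essentially the same approach as the paper: the paper's proof consists solely of the instruction ``Using the same arguments as in the proof of \cite[Theorem 6.1, p.~86]{Kai}'', and Diethelm's Theorem~6.1 is precisely the fractional Peano-type existence theorem obtained via the Volterra reformulation, Arzel\`a--Ascoli, and Schauder's fixed point theorem that you have written out in detail. In other words, you have supplied exactly the argument the paper defers to.
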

\begin{proof}
Using the same arguments as in the proof of \cite[Theorem 6.1, p. 86]{Kai}.
\end{proof}
\begin{theorem}[Existence of unique solution on maximal interval of existence]\label{exist_result_2}
Assume additionally that the function $f(\cdot,\cdot)$ is uniformly Lipschitz continuous with respect to the second variable on $G$. 
 Then there exists (a maximal time) $T_b(x_0)\in (0,T]$ such that the problem \eqref{p_1}-\eqref{p_2} has a unique solution $\varphi(\cdot,x_0)\in C([0,T_b(x_0)],\R^d)$. Moreover, for any $0\leq t\leq T_b(x_0)$ we have $(t,\varphi(t,x_0))\in G$, and $(T_b(x_0),\varphi(T_b(x_0),x_0))\in \partial G$, i.e. either $T_b(x_0) = T$, or $T_b(x_0)<T$ and $\|\varphi(T_b(x_0),x_0)-x_0\|=K$.
\end{theorem}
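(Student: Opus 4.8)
The plan is to run the classical continuation scheme, paying extra attention to the nonlocal memory term of~\eqref{p_1}. By Lemma~\ref{equi_l} the problem is equivalent to the fixed point equation $y=\mathcal{T}y$ on a suitable space, where $(\mathcal{T}y)(t):=x_0+\frac{1}{\Gamma(\alpha)}\int_0^t(t-\tau)^{\alpha-1}f(\tau,y(\tau))\,d\tau$; write $M:=\max_{(t,x)\in G}\|f(t,x)\|<\infty$ (finite since $G$ is compact and $f$ continuous) and $L$ for the Lipschitz constant. \emph{Step 1 (local existence and uniqueness).} For $h>0$ small, $\mathcal{T}$ maps the closed ball $B_{C_\infty([0,h],\R^d)}(x_0,K)$ into itself, since for $y$ in this ball one has $(t,y(t))\in G$ and hence $\|(\mathcal{T}y)(t)-x_0\|\le \frac{Mt^\alpha}{\Gamma(\alpha+1)}\le\frac{Mh^\alpha}{\Gamma(\alpha+1)}\le K$; iterating the Lipschitz estimate gives $\|\mathcal{T}^ny-\mathcal{T}^nz\|_\infty\le\frac{L^nh^{n\alpha}}{\Gamma(n\alpha+1)}\|y-z\|_\infty$ with $\sum_n\frac{L^nh^{n\alpha}}{\Gamma(n\alpha+1)}=E_\alpha(Lh^\alpha)<\infty$, so Weissinger's fixed point theorem yields a unique solution on $[0,h]$ with graph in $G$. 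The very same estimate on any interval $[0,\tau]$ shows that a solution with graph in $G$ is unique there, which already gives the uniqueness assertion of the theorem.

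\emph{Step 2 (the maximal interval).} Put
\[
T_b(x_0):=\sup\bigl\{\tau\in(0,T]:\ \eqref{p_1}\text{--}\eqref{p_2}\text{ has a solution }\varphi_\tau\in C([0,\tau],\R^d)\text{ with }(t,\varphi_\tau(t))\in G\ \forall t\in[0,\tau]\bigr\},
\]
which is well defined and positive by Step~1; by uniqueness the $\varphi_\tau$ patch together to a function $\varphi(\cdot,x_0)$ on $[0,T_b(x_0))$. To see that the solution exists on the \emph{closed} interval, note $\|f(\tau,\varphi(\tau,x_0))\|\le M$, so splitting $\int_0^{t_2}(t_2-\tau)^{\alpha-1}f\,d\tau-\int_0^{t_1}(t_1-\tau)^{\alpha-1}f\,d\tau$ into the piece over $[t_1,t_2]$ and the difference of kernels over $[0,t_1]$ (which is integrable) produces a modulus of continuity for $\varphi(\cdot,x_0)$ uniform in $t_1,t_2<T_b(x_0)$; hence $\varphi(T_b(x_0),x_0):=\lim_{t\to T_b(x_0)^-}\varphi(t,x_0)$ exists, $(T_b(x_0),\varphi(T_b(x_0),x_0))\in G$ since $G$ is closed, and letting $t\to T_b(x_0)^-$ in the Volterra equation shows $\varphi(\cdot,x_0)$ solves~\eqref{p_1}--\eqref{p_2} on all of $[0,T_b(x_0)]$, so the supremum is attained.

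\emph{Step 3 (boundary behaviour).} Suppose $T_b(x_0)<T$ and, for contradiction, $\|\varphi(T_b(x_0),x_0)-x_0\|<K$. For $t\ge T_b(x_0)$ rewrite the Volterra equation as $y(t)=g(t)+\frac{1}{\Gamma(\alpha)}\int_{T_b(x_0)}^t(t-\tau)^{\alpha-1}f(\tau,y(\tau))\,d\tau$, where $g(t):=x_0+\frac{1}{\Gamma(\alpha)}\int_0^{T_b(x_0)}(t-\tau)^{\alpha-1}f(\tau,\varphi(\tau,x_0))\,d\tau$. The crucial point is that $g\in C([T_b(x_0),T],\R^d)$ with $g(T_b(x_0))=\varphi(T_b(x_0),x_0)$, so by continuity $\|g(t)-x_0\|\le K'<K$ on some $[T_b(x_0),T_b(x_0)+\delta_0]$. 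Redoing the Weissinger argument of Step~1 on $[T_b(x_0),T_b(x_0)+\delta]$ for $\delta\le\delta_0$ small enough that $K'+\frac{M\delta^\alpha}{\Gamma(\alpha+1)}\le K$ gives a continuation of $\varphi(\cdot,x_0)$ beyond $T_b(x_0)$ whose graph still lies in $G$; gluing yields a solution on $[0,T_b(x_0)+\delta]$ with graph in $G$, contradicting maximality. Hence either $T_b(x_0)=T$ or $\|\varphi(T_b(x_0),x_0)-x_0\|=K$, i.e. $(T_b(x_0),\varphi(T_b(x_0),x_0))\in\partial G$.

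I expect the continuation in Step~3 to be the only genuine obstacle. Unlike the ODE situation, the history integral over $[0,T_b(x_0)]$ does not disappear when one restarts the equation, so it must be isolated as the forcing term $g$, and one has to verify both that $g$ is continuous up to $t=T_b(x_0)$ with $g(T_b(x_0))=\varphi(T_b(x_0),x_0)$ and that $g$ keeps the extended solution inside $G$ for a short time; this is precisely where the assumption $\|\varphi(T_b(x_0),x_0)-x_0\|<K$ is used. Everything else reduces to the elementary contraction estimate $\|\mathcal{T}^ny-\mathcal{T}^nz\|_\infty\le\frac{L^nh^{n\alpha}}{\Gamma(n\alpha+1)}\|y-z\|_\infty$ together with the integrability and continuity properties of the kernel $(t-\tau)^{\alpha-1}$.
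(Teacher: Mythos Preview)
Your argument is correct. The paper itself does not give a proof at all: it simply cites \cite[Proposition~4.6]{LeiLi} and stops. So your proposal is not a variant of the paper's proof but a self-contained substitute for an external reference.

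Substantively, your three-step continuation scheme is the standard route, and you have handled the one genuinely fractional issue correctly: in Step~3 the history over $[0,T_b(x_0)]$ cannot be discarded when restarting, so you absorb it into the continuous forcing term $g$, check $g(T_b(x_0))=\varphi(T_b(x_0),x_0)$ via dominated convergence (the kernel $(t-\tau)^{\alpha-1}$ is dominated by $(T_b(x_0)-\tau)^{\alpha-1}$ for $t\ge T_b(x_0)$), and then run the contraction on the ball $\{y:\|y(t)-x_0\|\le K\}$ rather than a ball centred at the restart point. The H\"older estimate in Step~2, $\|\varphi(t_2)-\varphi(t_1)\|\le \frac{2M}{\Gamma(\alpha+1)}(t_2-t_1)^\alpha$, follows from the subadditivity of $s\mapsto s^\alpha$ and gives the uniform Cauchy property you need. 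What your write-up buys over the paper's one-line citation is a transparent account of exactly where the nonlocality enters and why it does not obstruct continuation; the cost is a page of routine estimates that the paper chose to outsource.
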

\begin{proof}
The proof is followed directly from \cite[Proposition 4.6, p.\ 2892]{LeiLi}.
\end{proof}
\section{Asymptotic behavior of solutions to FDEs}\label{sec.power.rate}
In this section we study asymptotic properties of solutions to fractional differential equations and show some distinct features compared to that of solutions to ordinary differential equations. We first show that a solution of fractional differential equations does not converge to an equilibrium point with exponential rate. Then, we present various notions of stability of solutions to FDEs, some of them are completely analogous to that of the ODEs, but one, namely the Mittag-Leffler stability is a new notion of stability which is suitable for systems of fractional-order.
\subsection{Solution of FDEs cannot decay faster than power rate}
Consider a nonlinear fractional system of order $\alpha\in (0,1)$ in the form
\begin{equation}\label{introEq}
^{C}D_{0+}^\alpha x(t)=g(t,x(t)),\quad t>0,
\end{equation}
where $g:\R_{\geq 0}\times \R^d\rightarrow \R^d$ satisfies the three conditions: 
\begin{itemize}
\item[(g.1)] $g(\cdot,\cdot)$ is continuous;
\item[(g.2)] $g(t,0)=0$ for all $t \geq 0$;
\item[(g.3)] $g(\cdot,\cdot)$ is global Lipschitz continuous with respect to the second variable, i.e., there exists a constant $L >0$ such that $\|g(t,x)-g(t,y)\|\leq L\|x-y\|$ for all $t\geq 0$ and $x,y\in \R^d$.
\end{itemize}
It is well known that the initial value problem for the fraction differential equation \eqref{introEq} has unique solution defined on the whole $\R_{\geq 0}$, for any  given initial value in $\R^d$ (see \cite[Theorem 2]{Baleanu}).
We will prove that there is no nontrivial solution of \eqref{introEq} converging to the origin with exponential rate.
\begin{lemma}\label{motivation}
Every nontrivial solution of \eqref{introEq} does not converge to the origin with exponential rate.
\end{lemma}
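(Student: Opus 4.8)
The plan is to argue by contradiction, using that under (g.1)--(g.3) the initial value problem for \eqref{introEq} has a unique solution on all of $\R_{\geq 0}$. Consequently a \emph{nontrivial} solution $x(\cdot)$ must satisfy $x(0)=:x_0\neq 0$, since otherwise uniqueness (with the zero function, legitimate by (g.2)) would force $x\equiv 0$. Suppose then, for contradiction, that $x(\cdot)$ converges to the origin with exponential rate, i.e.\ there are constants $C,\lambda>0$ and $t_0\geq 0$ with $\|x(t)\|\leq Ce^{-\lambda t}$ for all $t\geq t_0$. I would then invoke the Volterra representation of Lemma \ref{equi_l},
\[
x(t)=x_0+\frac{1}{\Gamma(\alpha)}\int_0^t(t-\tau)^{\alpha-1}g(\tau,x(\tau))\,d\tau,\qquad t\geq 0,
\]
together with the pointwise bound $\|g(\tau,x(\tau))\|=\|g(\tau,x(\tau))-g(\tau,0)\|\leq L\|x(\tau)\|$ from (g.2)--(g.3), and show that the memory integral tends to $0$ as $t\to\infty$; letting $t\to\infty$ then yields $x(t)\to x_0$, which together with $x(t)\to 0$ forces $x_0=0$, the desired contradiction.

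The core estimate is the claim that $\int_0^t(t-\tau)^{\alpha-1}e^{-\lambda\tau}\,d\tau\to 0$ as $t\to\infty$. After the substitution $u=t-\tau$ this integral equals $e^{-\lambda t}\int_0^t u^{\alpha-1}e^{\lambda u}\,du$, and splitting the range of integration at $u=t/2$ and using that $u\mapsto u^{\alpha-1}$ is decreasing (as $\alpha\in(0,1)$) gives
\[
e^{-\lambda t}\int_0^t u^{\alpha-1}e^{\lambda u}\,du\;\leq\;\frac{(t/2)^{\alpha}}{\alpha}\,e^{-\lambda t/2}\;+\;\frac{(t/2)^{\alpha-1}}{\lambda},
\]
and both terms on the right vanish as $t\to\infty$ (the first because exponential decay beats polynomial growth, the second because $\alpha-1<0$); alternatively one may quote $\int_0^t u^{\alpha-1}e^{\lambda u}\,du\sim \lambda^{-1}t^{\alpha-1}e^{\lambda t}$ via L'Hôpital. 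To deal with the initial segment $\tau\in[0,t_0]$, where the exponential bound is not assumed, I would observe that $g(\cdot,x(\cdot))$ is bounded there by continuity, while $(t-\tau)^{\alpha-1}\leq (t-t_0)^{\alpha-1}$ for $\tau\leq t_0<t$, so that contribution is $O((t-t_0)^{\alpha-1})\to 0$ as well. Putting these pieces together yields $\|x(t)-x_0\|\to 0$, completing the contradiction.

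The only mildly delicate point is the asymptotic bound on the fractional memory integral, which is a standard Laplace-type computation and should not be a real obstacle; everything else is bookkeeping built on Lemma \ref{equi_l} and the Lipschitz hypothesis. I would also note in passing that the estimate actually gives the quantitative bound $\|x(t)-x_0\|=O(t^{\alpha-1})$, so the argument quantifies why the memory term can never cancel a nonzero $x_0$ fast enough; this is in keeping with the power-rate relaxation phenomena that recur throughout the paper.
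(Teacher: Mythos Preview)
Your argument is correct and follows a genuinely different route from the paper's. Both proofs start from the Volterra representation and the Lipschitz bound $\|g(\tau,x(\tau))\|\leq L\|x(\tau)\|$, and both reach a contradiction by showing the memory integral cannot account for the nonzero initial value $x_0$. But where you estimate $\int_0^t(t-\tau)^{\alpha-1}e^{-\lambda\tau}\,d\tau$ directly by an elementary split at $u=t/2$, the paper instead compares the exponential to the Mittag--Leffler function: it uses that $e^{-\lambda t}<K^{-1}E_\alpha(-Lt^\alpha)$ for large $t$, together with the exact limit $\int_0^t(t-s)^{\alpha-1}E_\alpha(-Ls^\alpha)\,ds\to\Gamma(\alpha)/L$ (coming from the fact that $E_\alpha(-Lt^\alpha)$ solves ${}^{C}D^\alpha_{0+}x=-Lx$, $x(0)=1$), to force $K\|x_0\|\leq 1$ for every $K>0$. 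Your approach is more self-contained---no Mittag--Leffler asymptotics are invoked---and the split estimate is sharp enough to give the quantitative $O(t^{\alpha-1})$ you note. The paper's approach, on the other hand, is set up so that it transfers verbatim to the stronger Theorem~\ref{thm.power-rate} on power-rate decay: one simply replaces $e^{\lambda t}$ by $t^\beta/(M+1)$, since the Mittag--Leffler comparison still applies whenever the assumed decay is faster than $t^{-\alpha}$.
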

\begin{proof}
Due to the existence and uniqueness of solution to \eqref{introEq}, for any $x_0\ne 0$, the initial value problem \eqref{introEq} with the condition $x(0)=x_0$ has the unique solution $\Phi(\cdot,x_0)$ on the interval $[0,\infty)$. Assume that this solution converges to the origin with the exponential rate,
then
 there exist positive constants $\lambda$ and $T_1$ such that
\begin{equation}\label{t1}
\|\Phi(t,x_0)\|<\frac{1}{\exp{(\lambda t)}},\quad\text{for all}\; t\geq T_1.
\end{equation}
Take and fix a positive number $K>0$ satisfying $K\|x_0\|>1$. We recall here the notion of Mittag-Leffler functions; namely,
the \emph{Mittag-Leffler matrix function} $E_{\alpha,\beta}(A)$, for $\beta >0$ and a matrix $A\in\R^{d\times d}$ is defined as
\[
E_{\alpha,\beta}(A):=\sum_{k=0}^\infty \frac{A^k}{\Gamma(\alpha k+\beta)},\qquad E_{\alpha}(A):=E_{\alpha,1}(A),
\]
see, e.g., Diethelm~\cite{Kai}. In case $d=1$ the above formula gives definition of Mittag-Lefler function of a real variable.
 From the asymptotic behavior of the exponential functions and Mittag-Leffler functions, there is a constant $T_2>0$ such that
\begin{equation}\label{t2}
\frac{1}{ \exp{(\lambda t)}}<\frac{E_\alpha(-Lt^\alpha)}{K},\quad\text{for all}\; t\geq T_2.
\end{equation}
Put $T_0=\max\{T_1,T_2\}$. Using the equivalent integral form of \eqref{introEq}, by virtue of \eqref{t1} and \eqref{t2}, we have
\begin{align}
\notag \Gamma(\alpha)\|x_0\| & \leq \limsup_{t\to\infty}\left(L\int_0^{T_0}(t-s)^{\alpha-1}\|\Phi(s,x_0)\|\;ds+L\int_{T_0}^{t}(t-s)^{\alpha-1}\|\Phi(s,x_0)\|\;ds\right)\\
\notag  & \hspace*{-1.3cm}\leq \limsup_{t\to\infty}\left(L\sup_{s\in[0,T_0]}\|\Phi(s,x_0\|\int_0^{T_0}(t-s)^{\alpha-1}\;ds+\frac{L}{K}\int_{T_0}^t(t-s)^{\alpha-1}E_\alpha (-Ls^\alpha)\;ds\right)\\
& \leq \limsup_{t\to\infty}\frac{L}{K}\int_0^t(t-s)^{\alpha-1}E_\alpha (-Ls^\alpha)\;ds.\label{t3}
\end{align}
It is worth mentioning that $E_\alpha(-Lt^\alpha)$ is the solution of the initial value problem 
\begin{align*}
^{C}D^\alpha_{0+}x(t)&=-Lx(t),\quad t>0,\\
x(0)=1.
\end{align*}
Hence,
\[
E_\alpha(-Lt^\alpha)=1-\frac{L}{\Gamma(\alpha)}\int_0^t (t-s)^{\alpha-1}E_\alpha(-Ls^\alpha)\;ds,\quad \forall t\geq 1,
\]
and
\[
\lim_{t\to\infty}\int_0^t (t-s)^{\alpha-1}E_\alpha(-Ls^\alpha)\;ds=\frac{\Gamma(\alpha)}{L},
\]
a contradiction with \eqref{t3}. Therefore, there do not exist any nontrivial solution of \eqref{introEq} converging to the origin with the exponential rate. The proof is complete. 
\end{proof}
A closer look at the proof of  Lemma~\ref{motivation} allows us to have a even stronger statement on the decaying rate of solutions to fractional differential equations.

\begin{theorem}[Power rate decay of solution of FDEs] \label{thm.power-rate}
Any nontrivial solution of the FDE \eqref{introEq} cannot decay to 0 faster than $t^{-\alpha}$. More precisely, let $\Phi(\cdot,x_0)$ be an arbitrary solution of  the FDE \eqref{introEq} with initial value $\Phi(0,x_0) = x_0\not=0$ and $\beta >0$ be an arbitrary positive number satisfying $\beta > \alpha$, then 
$$
\limsup_{t\rightarrow+\infty}t^\beta\|\Phi(t,x_0) \| = +\infty.
$$
\end{theorem}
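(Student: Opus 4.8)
The plan is to argue by contradiction, sharpening the argument used for Lemma~\ref{motivation}. Suppose, contrary to the claim, that some nontrivial solution $\Phi(\cdot,x_0)$ satisfies $\limsup_{t\to+\infty}t^\beta\|\Phi(t,x_0)\|<+\infty$ for a fixed $\beta>\alpha$. Then there exist constants $M>0$ and $T_1>0$ such that $\|\Phi(t,x_0)\|\le M t^{-\beta}$ for all $t\ge T_1$; since $\beta>\alpha>0$, this forces $\|\Phi(t,x_0)\|\to 0$ as $t\to+\infty$. The aim is to feed this fast decay back into the Volterra integral equation and conclude that $x_0=0$, a contradiction.

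Starting from $\Phi(t,x_0)=x_0+\frac{1}{\Gamma(\alpha)}\int_0^t(t-s)^{\alpha-1}g(s,\Phi(s,x_0))\,ds$ and using conditions (g.2) and (g.3) to estimate $\|g(s,\Phi(s,x_0))\|\le L\|\Phi(s,x_0)\|$, a rearrangement yields, for every $t>0$,
\[
\|x_0\|\le \|\Phi(t,x_0)\|+\frac{L}{\Gamma(\alpha)}\int_0^t(t-s)^{\alpha-1}\|\Phi(s,x_0)\|\,ds .
\]
It therefore suffices to show that the right-hand side tends to $0$ as $t\to+\infty$. Splitting the integral at $T_1$: the contribution of $[0,T_1]$ is at most $\big(\sup_{s\in[0,T_1]}\|\Phi(s,x_0)\|\big)\,T_1\,(t-T_1)^{\alpha-1}$, which tends to $0$ because $\alpha-1<0$; the contribution of $[T_1,t]$ is at most $M\int_{T_1}^t(t-s)^{\alpha-1}s^{-\beta}\,ds$.

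Thus the technical heart is to prove that $\int_{T_1}^t(t-s)^{\alpha-1}s^{-\beta}\,ds\to 0$ as $t\to+\infty$. I would do this by splitting this integral at $s=t/2$: on $[T_1,t/2]$ one has $(t-s)^{\alpha-1}\le(t/2)^{\alpha-1}$, so this piece is bounded by a constant times $t^{\alpha-1}\int_{T_1}^{t/2}s^{-\beta}\,ds$, which is $O(t^{\alpha-1})$ when $\beta>1$, $O(t^{\alpha-1}\log t)$ when $\beta=1$, and $O(t^{\alpha-\beta})$ when $\beta<1$ --- each tending to $0$; on $[t/2,t]$ one has $s^{-\beta}\le(t/2)^{-\beta}$, so this piece is bounded by a constant times $t^{-\beta}\int_{t/2}^t(t-s)^{\alpha-1}\,ds=O(t^{\alpha-\beta})$, again tending to $0$. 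All the estimates close precisely because $\beta>\alpha$ and $\alpha<1$. Hence the right-hand side of the displayed inequality tends to $0$, giving $x_0=0$, the desired contradiction.

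I expect the only genuine obstacle to be the regime $\beta\ge 1$: here the rescaling $s=tu$ produces the divergent convolution $\int_0^1(1-u)^{\alpha-1}u^{-\beta}\,du$, so one cannot simply quote a Beta-function identity as in the $\alpha<\beta<1$ case. The split-at-$t/2$ device avoids this divergence and treats all $\beta>\alpha$ on the same footing; the remaining steps are routine manipulations of the integral equation together with elementary power-and-logarithm asymptotics.
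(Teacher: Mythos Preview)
Your argument is correct and complete. The contradiction via the Volterra equation, the split at $T_1$, and the further split at $t/2$ to handle the convolution $\int_{T_1}^t(t-s)^{\alpha-1}s^{-\beta}\,ds$ all work exactly as you describe; the case analysis in $\beta$ is accurate, and the estimates close precisely when $\beta>\alpha$ and $\alpha<1$.

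Your route differs from the paper's, though. The paper does not estimate $\int_{T_1}^t(t-s)^{\alpha-1}s^{-\beta}\,ds$ directly. Instead it recycles the proof of Lemma~\ref{motivation} verbatim, replacing the exponential bound $\|\Phi(t,x_0)\|<e^{-\lambda t}$ by the power bound $\|\Phi(t,x_0)\|<(M+1)t^{-\beta}$. The key step is then the comparison $(M+1)t^{-\beta}<K^{-1}E_\alpha(-Lt^\alpha)$ for large $t$, which holds because $E_\alpha(-Lt^\alpha)\sim c\,t^{-\alpha}$ and $\beta>\alpha$; after that, the convolution is evaluated \emph{exactly} via the identity $\frac{L}{\Gamma(\alpha)}\int_0^t(t-s)^{\alpha-1}E_\alpha(-Ls^\alpha)\,ds=1-E_\alpha(-Lt^\alpha)$, yielding $\Gamma(\alpha)\|x_0\|\le\Gamma(\alpha)/K$ and the same contradiction. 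So the paper trades your elementary splitting estimate for a Mittag-Leffler comparison plus an exact integral identity. Your approach is more self-contained---it needs no asymptotics of $E_\alpha$ and no special identity---while the paper's is shorter on the page because it simply points back to Lemma~\ref{motivation}. Both lead to the identical inequality $\|x_0\|\le o(1)$ and the same contradiction.
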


\begin{proof}
Assume, in contrary, that there exists an $\beta>\alpha$ such that
$$
\limsup_{t\rightarrow+\infty}t^\beta\|\Phi(t,x_0) \| = M <\infty.
$$
It suffices to use the arguments of the proof of Lemma \ref{motivation},  modifying the relations \eqref{t1} and \eqref{t2}   by changing $\exp(\lambda t)$ there to $t^\beta/(M+1)$, to derive a contradiction.
\end{proof}
\begin{remark}
Lemma \ref{motivation} remains true if we replace the strong condition of global Lipschitz property (g.3) by a weaker condition of local Lipschitz property of $g$ at the origin:
 \begin{itemize}
\item[(g.3')] There are positive constants $a>0, L>0$ such that $\|g(t,x)-g(t,y)\|\leq L\|x-y\|$ for all $t\geq 0$ and $x,y\in \R^d, \|x\|\leq a, \|y\|\leq a$.
\end{itemize}
  Similarly, nonuniform Lipschitz property (g.3') of $g$ suffices  for Theorem \ref{thm.power-rate}.
 \end{remark}
\subsection{Notions of stablity for FDE systems}
Consider the nonlinear fractional differential equation \eqref{introEq}
\begin{equation*}
^{C}D_{0+}^\alpha x(t)=g(t,x(t)),\quad t>0,
\end{equation*}
where $g:\R_{\geq 0}\times \R^d\rightarrow \R^d$ is continuous and satisfies the condition (g.1)-(g.2)-(g.3').  Since $g$ is local Lipschitz continuous, Theorem \ref{exist_result_1} and Theorem \ref{exist_result_2} imply unique existence of solution to the initial value problem \eqref{introEq}, $x(0) = x_0$ for $x_0\in \R^d, \|x\|\leq a$. Let $\Phi : I \times \R^d \rightarrow \R^d$ denote
the solution of \eqref{nonLinear}, $x(0) = x_0$, on its maximal interval of existence $I = [0, T_b (x_0))$ with $0 < T_{b} (x_0) \leq \infty $. We recall notions of stability and asymptotical stability of the trivial solution of \eqref{introEq} which is a direct application of the stability notions from classical ordinary differential equations theory to the FDE case, cf.\ \cite[Definition 7.2, p.~157]{Kai}. 
\begin{definition} 
\begin{itemize}
\item[(i)] The trivial solution of the nonlinear fractional differential equation \eqref{introEq} is called {\em stable} if for any $\varepsilon >0$ there exists $\delta=\delta(\varepsilon)>0$ such that for all $\|x_0\|<\delta$ we have $T_{b} (x_0)= \infty$ and $\|\Phi(t,x_0)\|<\varepsilon$ for all $t\ge 0$.
\item[(ii)] The trivial solution is called {\em asymptotically stable} if it is stable and there exists some $\tilde{\delta}>0$ such that $\lim_{t\to \infty}\|\Phi(t,x_0)\|=0$ whenever $\|x_0\|<\tilde{\delta}$.
\end{itemize}
\end{definition}
It is well known that there is a notion of exponential stability of solution of ordinary differential equations which related to the exponential rate of convergence to solutions. However, the results of Section~\ref{sec.power.rate} show that the non-trivial solution of FDEs cannot decay with exponential rate but at most power rate. Therefore, it make sense to investigate the power rate of decay of solution to FDEs.

In the equation \eqref{introEq} if $g(t,x)=Ax$ for all $t\geq 0$, $x\in \R^d$ and $A\in \R^{d\times d}$, then for any $x_0\in \R^d$, this system with the initial condition $x(0)=x_0$ has the unique solution $E_\alpha(t^\alpha A)x_0$ on the interval $[0,\infty)$. This suggests us to use the Mittag-Leffler function in establishing a suitable stability definition for systems of fractional- order. 

Motivated by Lemma \ref{motivation}, we now propose a new definition to characterize the convergent rate to the equilibrium points of solutions of FDEs. This is similar to that introduced by several authors (see Li \textit{et al.}~\cite{Li_1,Li_2}, Choi \textit{et al.}~\cite{Choi-14},  Stamova~\cite{Stamova_15}).
\begin{definition}\label{dfn.MLstability}
The equilibrium point $x^* = 0$ of  \eqref{introEq} is called {\em Mittag-Leffler stable} if there exist positive constants $\beta$, $m$ and $\delta$ such that
\begin{equation}
\sup_{t\geq 0} t^\beta \|\Phi(t,x_0)\| \leq m 
\end{equation}
for all $\|x_0\| \leq \delta $. 
\end{definition}

\begin{remark}
(i) Our definition of Mittag-Leffler stability is formulated in the form similar to the notion of exponential stability in the classical theory of ordinary differential equations. It reveals the power rate of decay of solutions of Mittag-Leffler stable  systems.

(ii) Due to the asymptotic behavior of the Mittag-Leffler function our definition is equivalent to the definition of Mittag-Leffler stability by several other authors (see Li \textit{et al.} \cite{Li_1,Li_2}, Choi \textit{et al.} \cite{Choi-14},  Stamova \cite{Stamova_15}).

(iii) In light of Theorem \ref{thm.power-rate} the parameter $\beta$ in the Definition \ref{dfn.MLstability} must satisfy $\beta\leq\alpha$.
\end{remark}

\section{Linearized Mittag-Leffler stability of fractional systems}\label{first_method}
In this section, we will develop a Lyapunov's first method to study the asymptotic behavior of solutions to FDEs.	Based on a variation of constant formula, properties of Mittag-Leffler function, Lyapunov-Perron approach and a new weighted norm which first appears in the literature, we obtain the Mittag-Leffler stability of fixed points to a class of nonlinear FDEs linearized about its equilibrium points.
\subsection{Formulation of the result}
Consider a nonlinear fractional differential equation of the following form
\begin{equation}\label{nonLinear}
^{C}D_{0+}^{\alpha}x(t)=Ax(t)+f(x(t)),
\end{equation}
where $A\in \R^{d\times d}$ and $f:\R^d\rightarrow \R^d$ is continuous on $\R^d$ and Lipschitz continuous in a neighborhood of the origin satisfying 
\begin{equation}\label{Lipschitz condition}
f(0)=0\quad\hbox{and}\quad\lim_{r\to 0} \ell_f(r)=0,
\end{equation}
in which 
\[
\ell_f(r):=\sup_{x,y\in B_{\R^d}(0,r)}\frac{\|f(x)-f(y)\|}{\|x-y\|}.
\]
Furthermore, let $\lambda_1,\dots,\lambda_n$ denote the eigenvalues of $A$. Suppose that 
\begin{equation}\label{stable_cond}
\lambda_i\in \Lambda_\alpha^s:=\left\{\lambda\in\C\setminus\{0\}:|\hbox{arg}(\lambda)|>\frac{\alpha \pi}{2}\right\},\qquad i=1,\dots,n.
\end{equation}
Our task is to study the asymptotic behavior of solutions to \eqref{nonLinear} around the origin. In \cite{CST_16}, the authors give a linearized stability theorem for the trivial solution of \eqref{nonLinear} as follows.
\begin{theorem}[see {\cite[Theorem 3.1]{CST_16}}]
Assume that $A$ satisfies the condition \eqref{stable_cond} and $f(\cdot)$ satisfies the condition \eqref{Lipschitz condition}. Then the trivial solution of the system \eqref{nonLinear} is asymptotically stable.
\end{theorem}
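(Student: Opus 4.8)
The plan is to establish the stronger conclusion that the origin is Mittag--Leffler stable in the sense of Definition~\ref{dfn.MLstability} with exponent $\beta=\alpha$; by the very form of that definition this contains both stability and attractivity, hence asymptotic stability. The proof I have in mind is a Lyapunov--Perron-type fixed point argument set in a space of functions on $[0,\infty)$ carrying the sup-norm weighted by $(1+t^\alpha)$, so that membership in the space is precisely a bound $\|\xi(t)\|\le C/(1+t^\alpha)$. Since $f$ is controlled only near the origin, I would first replace it by a globally defined Lipschitz map $\tilde f$ agreeing with $f$ on a ball $B_{\R^d}(0,r)$, with $\tilde f(0)=0$ and global Lipschitz constant at most $2\ell_f(r)$; any solution of the modified equation that never leaves $B_{\R^d}(0,r)$ solves \eqref{nonLinear}. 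The analytic backbone is the standard variation-of-constants formula (obtainable from Lemma~\ref{equi_l}): a continuous $\xi$ with $\xi(0)=x_0$ solves the modified problem if and only if
\[
\xi(t)=E_\alpha(t^\alpha A)x_0+\int_0^t(t-s)^{\alpha-1}E_{\alpha,\alpha}\bigl((t-s)^\alpha A\bigr)\tilde f(\xi(s))\,ds,\qquad t\ge 0.
\]

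The spectral hypothesis enters through decay estimates for the Mittag--Leffler matrix functions: since \eqref{stable_cond} places every eigenvalue of $A$ in the open sector $\{\lambda\neq 0:|\arg\lambda|>\alpha\pi/2\}$, the classical asymptotics yield a constant $M\ge 1$ with
\[
\|E_\alpha(t^\alpha A)\|\le\frac{M}{1+t^\alpha}\qquad\text{and}\qquad\|E_{\alpha,\alpha}(t^\alpha A)\|\le\frac{M}{1+t^{2\alpha}}\qquad(t\ge 0);
\]
the faster $t^{-2\alpha}$ decay of the second function, caused by the vanishing of the $1/\Gamma(0)$ coefficient in its expansion, is what makes everything fit. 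Let $X$ be the Banach space of continuous $\xi:[0,\infty)\to\R^d$ with $\|\xi\|_w:=\sup_{t\ge 0}(1+t^\alpha)\|\xi(t)\|<\infty$, and let $\mathcal{T}_{x_0}$ send $\xi\in X$ to the right-hand side of the displayed integral equation. The technical heart is the convolution bound
\[
\kappa:=\sup_{t\ge 0}(1+t^\alpha)\int_0^t\frac{(t-s)^{\alpha-1}}{1+(t-s)^{2\alpha}}\cdot\frac{ds}{1+s^\alpha}<\infty,
\]
which I would prove by splitting the integral at $s=t/2$: on $[0,t/2]$ one uses $(t-s)^{\alpha-1}\bigl(1+(t-s)^{2\alpha}\bigr)^{-1}\le(t/2)^{-\alpha-1}$ and integrates $(1+s^\alpha)^{-1}$, while on $[t/2,t]$ one uses $(1+s^\alpha)^{-1}\le(t/2)^{-\alpha}$ together with the integrability of $u\mapsto u^{\alpha-1}(1+u^{2\alpha})^{-1}$ on $[0,\infty)$ (it behaves like $u^{\alpha-1}$ near $0$ and like $u^{-\alpha-1}$ at infinity).

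Granting $\kappa<\infty$, the remainder is a routine Banach contraction argument. From $\tilde f(0)=0$ and the bound on $E_\alpha$ one gets $\|\mathcal{T}_{x_0}0\|_w\le M\|x_0\|$, and from the Lipschitz bound on $\tilde f$ together with $\kappa$ one gets $\|\mathcal{T}_{x_0}\xi-\mathcal{T}_{x_0}\eta\|_w\le 2M\kappa\,\ell_f(r)\,\|\xi-\eta\|_w$. Fixing $r$ so small that $2M\kappa\,\ell_f(r)\le\tfrac12$ (possible by \eqref{Lipschitz condition}), $\mathcal{T}_{x_0}$ is a $\tfrac12$-contraction of the closed ball $\{\|\xi\|_w\le 2M\|x_0\|\}$ into itself whenever $2M\|x_0\|\le r$; its unique fixed point is the solution $\Phi(\cdot,x_0)$, which therefore satisfies $\|\Phi(t,x_0)\|\le 2M\|x_0\|/(1+t^\alpha)$ for all $t\ge 0$, in particular stays in $B_{\R^d}(0,r)$ and hence solves \eqref{nonLinear}. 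Thus $\sup_{t\ge 0}t^\alpha\|\Phi(t,x_0)\|\le 2M\|x_0\|$ for all $\|x_0\|\le r/(2M)$, which is Mittag--Leffler stability with $\beta=\alpha$, $\delta=r/(2M)$, $m=r$; and given $\varepsilon>0$, taking $\|x_0\|<\min\{r/(2M),\varepsilon/(2M)\}$ gives $T_b(x_0)=\infty$, $\|\Phi(t,x_0)\|<\varepsilon$ for all $t\ge 0$, and $\Phi(t,x_0)\to 0$, i.e.\ asymptotic stability.

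I expect the main obstacle to be exactly the combination ``sharp Mittag--Leffler decay plus convolution estimate''. With only the weaker bound $\|E_{\alpha,\alpha}(t^\alpha A)\|\le M(1+t^\alpha)^{-1}$ the kernel in the convolution would decay merely like $t^{-1}$, the integral $\int_0^t$ would pick up a logarithmic factor, and multiplication by $(1+t^\alpha)$ would no longer keep it bounded, so the weighted norm would fail to be reproduced and the contraction would collapse; making the estimate close is precisely a matter of tuning the weight to the true decay rates of both $E_\alpha$ and $E_{\alpha,\alpha}$. (A Lyapunov's-second-method proof via an explicit Lyapunov function is comparatively unpleasant here, because the Leibniz product rule is unavailable for the Caputo derivative --- which is why the variation-of-constants route is the natural one for the linearization theorem.)
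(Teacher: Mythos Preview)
Your proposal is correct and, like the paper, actually proves the stronger Theorem~\ref{main result} (Mittag--Leffler stability), from which asymptotic stability is immediate; the cited theorem itself is not proved in this paper but only recalled from \cite{CST_16}.

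The overall architecture---a Lyapunov--Perron fixed-point argument in a weighted sup-norm space with weight comparable to $t^\alpha$---matches the paper exactly. The technical route to the key contraction estimate, however, differs in two interlocking ways. First, the paper begins with a Jordan-type similarity that reduces $A$ to $\operatorname{diag}(\lambda_1,\dots,\lambda_n)$ plus an arbitrarily small nilpotent perturbation (absorbed into $h$), so that only \emph{scalar} Mittag--Leffler functions ever appear; you instead work directly with the matrix functions and invoke the uniform bound $\|E_{\alpha,\alpha}(t^\alpha A)\|\le M(1+t^{2\alpha})^{-1}$. This is legitimate, but note that proving it for non-diagonalizable $A$ essentially requires the same Jordan analysis you are bypassing (one must check that $t^{j\alpha}E_{\alpha,\alpha}^{(j)}(\lambda t^\alpha)$ still decays like $t^{-2\alpha}$ for each derivative arising from a Jordan block). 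Second, and more substantively, the paper does \emph{not} use the $t^{-2\alpha}$ decay of $E_{\alpha,\alpha}$ at all: after the comparison $|E_{\alpha,\alpha}(\lambda u^\alpha)|\le C\,E_{\alpha,\alpha}(-u^\alpha)$ it appeals to the exact convolution identity (Podlubny, formula (1.100))
\[
\int_0^t(t-s)^{\alpha-1}E_{\alpha,\alpha}\bigl(-(t-s)^\alpha\bigr)s^{-\alpha}\,ds=\Gamma(1-\alpha)\,E_\alpha(-t^\alpha),
\]
whose right-hand side is $O(t^{-\alpha})$, so the factor $t^\alpha$ in the weighted norm is absorbed without any splitting. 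Your splitting at $t/2$ achieves the same end by a direct (and elementary) estimate, at the cost of needing the sharper $E_{\alpha,\alpha}$ asymptotics; the paper's identity is slicker but more special. Both routes give the same contraction constant structure $C(\alpha,A)\cdot\ell_h(r)$ and the same final conclusion.
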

After work \cite[Theorem 3.1]{CST_16} a natural question now  arises: what is the rate of convergence to the origin for solutions of the nonlinear FDE \eqref{nonLinear}?  As shown above (see Theorem \ref{thm.power-rate}) the trivial solution of fractional-order systems can not be exponentially stable. Hence, the best rate of convergence one may expect is the polynomial rate, and one of our main contributions is the following result on Mittag -Leffler stability of FDEs.

\begin{theorem}[Lyapunov's first method for Mittag-Leffler stability]\label{main result}
Assume that $A$ satisfies \eqref{stable_cond} and $f(\cdot)$ satisfies the condition \eqref{Lipschitz condition}. Then the trivial solution of the system \eqref{nonLinear} is Mittag-Leffler stable.
\end{theorem}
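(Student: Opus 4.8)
The plan is to use the variation-of-constants formula from Lemma~\ref{equi_l} together with the Lyapunov--Perron method in a suitably chosen weighted function space, exploiting the decay of the Mittag-Leffler function $E_\alpha(t^\alpha A)$ guaranteed by the spectral hypothesis \eqref{stable_cond}. First I would recall the fundamental fact that, since every eigenvalue $\lambda_i$ of $A$ lies in $\Lambda_\alpha^s$, there are constants $C\geq 1$ and $\mu>0$ such that
\begin{equation*}
\|E_\alpha(t^\alpha A)\| \leq \frac{C}{1+\mu t^\alpha},\qquad \left\|\int_0^t (t-s)^{\alpha-1} E_{\alpha,\alpha}((t-s)^\alpha A)\,v(s)\,ds\right\|
\end{equation*}
enjoys a comparable power-type bound; these estimates are standard consequences of the asymptotic expansion of $E_{\alpha,\beta}$ on the sector $|\arg z|>\alpha\pi/2$. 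A solution $x(\cdot)$ of \eqref{nonLinear} with $x(0)=x_0$ then satisfies the fixed-point equation
\begin{equation*}
x(t) = E_\alpha(t^\alpha A)x_0 + \int_0^t (t-s)^{\alpha-1} E_{\alpha,\alpha}((t-s)^\alpha A)\, f(x(s))\,ds .
\end{equation*}

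Next I would introduce the weighted norm hinted at in the introduction: on $C([0,\infty),\R^d)$ (or on $C([0,T],\R^d)$ with constants uniform in $T$), set
\begin{equation*}
\|x\|_* := \sup_{t\geq 0}\,(1+\mu t^\alpha)\,\|x(t)\|,
\end{equation*}
so that Mittag-Leffler stability is precisely the statement that $\|\Phi(\cdot,x_0)\|_*$ stays bounded for small $x_0$. The heart of the argument is to show that the operator
\begin{equation*}
(\mathcal{T}_{x_0} x)(t) := E_\alpha(t^\alpha A)x_0 + \int_0^t (t-s)^{\alpha-1} E_{\alpha,\alpha}((t-s)^\alpha A)\, f(x(s))\,ds
\end{equation*}
is a contraction on a small ball $B_r := \{x : \|x\|_* \leq r\}$ in this weighted space, provided $\|x_0\|$ is small enough and $r$ is small enough. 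The linear term contributes $\leq C\|x_0\|$ to $\|\mathcal{T}_{x_0}x\|_*$. For the nonlinear term, one uses \eqref{Lipschitz condition}: on $B_r$ we have $\|x(s)\|\leq r$, hence $\|f(x(s))\|\leq \ell_f(r)\,\|x(s)\|$ and $\|f(x(s))-f(\tilde x(s))\|\leq \ell_f(r)\,\|x(s)-\tilde x(s)\|$, so the Lipschitz constant of $f$ on the relevant ball can be made arbitrarily small by shrinking $r$.

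The main obstacle, and the step that requires genuine care, is the estimate
\begin{equation*}
\sup_{t\geq 0}\,(1+\mu t^\alpha)\int_0^t (t-s)^{\alpha-1}\,\frac{C'}{1+\mu (t-s)^\alpha}\cdot\frac{1}{1+\mu s^\alpha}\,ds \;\leq\; \kappa < \infty,
\end{equation*}
i.e. that the convolution of the kernel bound with the weight reproduces the weight up to a uniform multiplicative constant $\kappa$. This is a pure (but delicate) real-variable computation: one splits $[0,t]$ at $t/2$, bounds $1+\mu(t-s)^\alpha \geq c(1+\mu t^\alpha)$ on $[0,t/2]$ and $1+\mu s^\alpha \geq c(1+\mu t^\alpha)$ on $[t/2,t]$, and then controls the two remaining integrals $\int_0^{t/2}(t-s)^{\alpha-1}(1+\mu s^\alpha)^{-1}ds$ and $\int_{t/2}^t (t-s)^{\alpha-1}(1+\mu(t-s)^\alpha)^{-1}ds$ by elementary means (the second is a Beta-type integral, finite and independent of $t$; the first is handled by noting $(t-s)^{\alpha-1}\leq (t/2)^{\alpha-1}$ there and using convergence of $\int_0^\infty (1+\mu s^\alpha)^{-1}ds$ when $\alpha<1$... which actually fails for $\alpha$ near $1$, so instead one keeps the $t^{\alpha-1}$ decay and balances it against the polynomial growth of $\int_0^{t/2}(1+\mu s^\alpha)^{-1}ds \sim t^{1-\alpha}$, whose product is bounded). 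Once $\kappa$ is pinned down, choose $r$ small so that $C'\,\ell_f(r)\,\kappa \leq 1/2$; then $\mathcal{T}_{x_0}$ maps $B_r$ into itself once $C\|x_0\|\leq r/2$, and is a $1/2$-contraction there. Its unique fixed point is, by Lemma~\ref{equi_l}, the solution $\Phi(\cdot,x_0)$; it lies in $B_r$, i.e. $(1+\mu t^\alpha)\|\Phi(t,x_0)\|\leq r$ for all $t\geq 0$, which is exactly $\sup_{t\geq 0} t^\alpha\|\Phi(t,x_0)\| \leq m$ with $m := r/\mu$, establishing Mittag-Leffler stability with $\beta=\alpha$ in the sense of Definition~\ref{dfn.MLstability}. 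Finally I would note that global existence of $\Phi(\cdot,x_0)$ on $[0,\infty)$ for small $x_0$ follows from the a priori bound together with Theorem~\ref{exist_result_2}.
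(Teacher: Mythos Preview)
Your overall strategy --- Lyapunov--Perron in a weighted space --- is the same as the paper's, but the key convolution estimate you isolate is \emph{false} as stated, and the attempted verification contains a concrete error. In your splitting, the piece on $[t/2,t]$ reduces, after the substitution $u=t-s$, to
\[
\int_0^{t/2} \frac{u^{\alpha-1}}{1+\mu u^\alpha}\,du
\;=\;\frac{1}{\mu\alpha}\ln\bigl(1+\mu (t/2)^\alpha\bigr),
\]
which diverges logarithmically as $t\to\infty$; it is not a ``Beta-type integral, finite and independent of $t$.'' Consequently the bound $\sup_{t\geq 0}(1+\mu t^\alpha)\int_0^t (t-s)^{\alpha-1}(1+\mu(t-s)^\alpha)^{-1}(1+\mu s^\alpha)^{-1}\,ds<\infty$ fails, and your contraction argument does not close.

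The underlying reason is that you have used too crude a bound on the kernel: you replaced $|E_{\alpha,\alpha}(\lambda u^\alpha)|$ by $C'/(1+\mu u^\alpha)$, making $u^{\alpha-1}|E_{\alpha,\alpha}(\lambda u^\alpha)|$ behave like $u^{-1}$, which is not integrable on $[1,\infty)$. In fact the asymptotic expansion in the stable sector gives the sharper decay $E_{\alpha,\alpha}(\lambda u^\alpha)=O(u^{-2\alpha})$, so that $u^{\alpha-1}|E_{\alpha,\alpha}(\lambda u^\alpha)|=O(u^{-\alpha-1})$ is integrable at infinity. The paper exploits precisely this: after a preliminary transformation of $A$ to near-diagonal form (so that only \emph{scalar} Mittag-Leffler functions appear), it uses $|E_{\alpha,\alpha}(\lambda_i u^\alpha)|\leq C_{\lambda_i}E_{\alpha,\alpha}(-u^\alpha)$ and the two estimates $\int_0^\infty u^{\alpha-1}|E_{\alpha,\alpha}(\lambda_i u^\alpha)|\,du<\infty$ and $\sup_{t\geq 1}t^\alpha\int_0^t (t-s)^{\alpha-1}E_{\alpha,\alpha}(-(t-s)^\alpha)s^{-\alpha}\,ds<\infty$ (Lemma~\ref{key_est}) to close the contraction in the norm $\|x\|_w=\max\{\sup_{[0,1]}\|x\|,\ \sup_{t\geq 1}t^\alpha\|x(t)\|\}$. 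If you upgrade your kernel bound to $C'/(1+\mu u^{2\alpha})$ (or, equivalently, keep $E_{\alpha,\alpha}$ itself in the estimate), your $[t/2,t]$ integral becomes $\int_0^{t/2} u^{\alpha-1}(1+\mu u^{2\alpha})^{-1}du\leq\int_0^\infty u^{\alpha-1}(1+\mu u^{2\alpha})^{-1}du<\infty$ and the rest of your argument goes through essentially unchanged. The transformation-of-the-linear-part step in the paper is a convenience for handling Jordan blocks, not a conceptual necessity; the essential missing ingredient in your proposal is the correct $O(u^{-2\alpha})$ decay of $E_{\alpha,\alpha}$.
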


To prove Theorem \ref{main result} we need the lemmas below.
\begin{lemma}\label{key_est}
\begin{itemize}
\item [(i)] For any $\lambda\in \Lambda_\alpha^s$, there exists a constant $C_1>0$ such that
\[
|E_\alpha(\lambda t^\alpha)|\leq C_1 E_\alpha(-t^\alpha),\quad \forall t\geq 0.
\]
\item [(ii)] There is a constant $C_2>0$ such that
\[
t^\alpha \int_0^t (t-s)^{\alpha-1}E_{\alpha,\alpha}(-(t-s)^\alpha)s^{-\alpha}\;ds\leq C_2,\quad \forall t\geq 0.
\]
\end{itemize}
\end{lemma}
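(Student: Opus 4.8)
The plan is to prove the two estimates by different means: (i) rests on the classical asymptotic expansion of the Mittag-Leffler function on sectors bounded away from the positive real axis, while (ii) is obtained by evaluating the convolution integral in closed form, which reduces it to the decay of $E_\alpha(-t^\alpha)$.

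For (i), the idea is to bound both sides by multiples of $(1+t^\alpha)^{-1}$. Since $|\arg\lambda|>\alpha\pi/2$ and $\alpha<1$, one may fix $\mu$ with $\alpha\pi/2<\mu<\min\{|\arg\lambda|,\alpha\pi\}$. The asymptotic expansion of the Mittag-Leffler function (see Diethelm~\cite{Kai}, Podlubny~\cite{Podlubny}) gives $E_\alpha(z)=-\frac{z^{-1}}{\Gamma(1-\alpha)}+O(|z|^{-2})$ as $|z|\to\infty$, uniformly for $\mu\leq|\arg z|\leq\pi$. Applying this with $z=\lambda t^\alpha$ (so $|\arg z|=|\arg\lambda|>\mu$) and using continuity of $t\mapsto E_\alpha(\lambda t^\alpha)$ on compact intervals produces a constant $M_\lambda>0$ with $|E_\alpha(\lambda t^\alpha)|\leq M_\lambda/(1+t^\alpha)$ for all $t\geq 0$. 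On the other hand $x\mapsto E_\alpha(-x)$ is positive on $[0,\infty)$ (it is completely monotone for $\alpha\in(0,1)$), hence $E_\alpha(-t^\alpha)$ is continuous and strictly positive there, and the same expansion with $z=-t^\alpha$ yields $E_\alpha(-t^\alpha)\sim\frac{1}{\Gamma(1-\alpha)}t^{-\alpha}$; so $E_\alpha(-t^\alpha)\geq c/(1+t^\alpha)$ for some $c>0$ and all $t\geq 0$. Dividing the two bounds and absorbing a factor $\max\{1,|\lambda|^{-1}\}$ gives (i).

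For (ii), I would compute the integral exactly rather than estimate it. Expanding $E_{\alpha,\alpha}(-(t-s)^\alpha)=\sum_{k\geq 0}\frac{(-1)^k(t-s)^{\alpha k}}{\Gamma(\alpha(k+1))}$ and integrating termwise against $s^{-\alpha}$ by means of the Beta integral $\int_0^t(t-s)^{p-1}s^{q-1}\,ds=\frac{\Gamma(p)\Gamma(q)}{\Gamma(p+q)}t^{p+q-1}$ with $p=\alpha(k+1)$, $q=1-\alpha$, one finds
\[
\int_0^t(t-s)^{\alpha-1}E_{\alpha,\alpha}(-(t-s)^\alpha)s^{-\alpha}\,ds=\Gamma(1-\alpha)\sum_{k\geq 0}\frac{(-1)^kt^{\alpha k}}{\Gamma(\alpha k+1)}=\Gamma(1-\alpha)\,E_\alpha(-t^\alpha).
\]
The interchange of sum and integral is justified by monotone/Tonelli convergence: since $\sum_{k\geq 0}\frac{(t-s)^{\alpha k+\alpha-1}}{\Gamma(\alpha(k+1))}=(t-s)^{\alpha-1}E_{\alpha,\alpha}((t-s)^\alpha)$ and $E_{\alpha,\alpha}$ is increasing on $[0,\infty)$, one has $\int_0^t(t-s)^{\alpha-1}E_{\alpha,\alpha}((t-s)^\alpha)s^{-\alpha}\,ds\leq E_{\alpha,\alpha}(t^\alpha)\,\Gamma(\alpha)\Gamma(1-\alpha)<\infty$ (both $(t-s)^{\alpha-1}$ near $s=t$ and $s^{-\alpha}$ near $s=0$ are integrable). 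Hence the left-hand side of (ii) equals $\Gamma(1-\alpha)\,t^\alpha E_\alpha(-t^\alpha)$, and the map $t\mapsto t^\alpha E_\alpha(-t^\alpha)$ is continuous on $[0,\infty)$, vanishes at $t=0$, and converges to $1/\Gamma(1-\alpha)$ as $t\to\infty$ (again by the asymptotics), so it is bounded; this gives (ii) with $C_2=\Gamma(1-\alpha)\sup_{t\geq 0}t^\alpha E_\alpha(-t^\alpha)$.

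The Beta-integral bookkeeping, the endpoint integrability checks, and the absorption of constants are all routine. The only delicate point is the uniformity in (i): one needs $|E_\alpha(\lambda t^\alpha)|\leq M_\lambda/(1+t^\alpha)$ to hold on all of $[0,\infty)$, which is why the argument is split into a compact part (handled by continuity) and a tail (handled by the asymptotic expansion), and it is precisely in choosing the admissible sector parameter $\mu$ that the hypothesis $\lambda\in\Lambda_\alpha^s$ and the restriction $\alpha\in(0,1)$ are used. I do not anticipate a deeper obstacle---the closed-form evaluation makes (ii) essentially a corollary of the power-rate decay of $E_\alpha(-t^\alpha)$, echoing the phenomenon discussed in Section~\ref{sec.power.rate}.
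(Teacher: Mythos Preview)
Your proof is correct and follows essentially the same route as the paper's: for (i) the paper simply declares the statement ``obvious,'' which in this context means exactly the asymptotic-expansion argument you spell out, and for (ii) the paper cites Podlubny~\cite[Formula~(1.100)]{Podlubny}---which is precisely the convolution identity $\int_0^t (t-s)^{\alpha-1}E_{\alpha,\alpha}(-(t-s)^\alpha)s^{-\alpha}\,ds=\Gamma(1-\alpha)E_\alpha(-t^\alpha)$ that you derive by hand via the Beta integral---together with the asymptotics of $E_\alpha(-t^\alpha)$. Your version simply supplies the details the paper leaves implicit; the remark about absorbing $\max\{1,|\lambda|^{-1}\}$ in (i) is unnecessary since $M_\lambda$ and $c$ already carry all the constants, but it does no harm.
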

\begin{proof}
\noindent (i) The proof of this statement is obvious.\\

\noindent (ii) The proof is deduced by using \cite[Formula (1.100)]{Podlubny} and the asymptotic behavior of Mittag-Leffler function $E_\alpha (-t^\alpha)$.
\end{proof}
\begin{lemma}
Let $\lambda\in \Lambda_\alpha^s$. Then, there exists a positive constant $C_3$ such that
\[
\int_0^\infty \tau^{\alpha-1}|E_{\alpha,\alpha}(\lambda \tau^\alpha)|\,d\tau<C_3.
\]
\end{lemma}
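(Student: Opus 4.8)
The plan is to split the integral $\int_0^\infty \tau^{\alpha-1}|E_{\alpha,\alpha}(\lambda\tau^\alpha)|\,d\tau$ into a piece near the origin and a tail, and to control the tail using the classical asymptotic expansion of the Mittag-Leffler function $E_{\alpha,\alpha}$ in the sector $|\arg(z)|>\alpha\pi/2$. First, on a compact interval $[0,1]$ the function $\tau\mapsto\tau^{\alpha-1}|E_{\alpha,\alpha}(\lambda\tau^\alpha)|$ is integrable: $E_{\alpha,\alpha}$ is entire, hence bounded on the bounded set $\{\lambda\tau^\alpha:\tau\in[0,1]\}$, and $\tau^{\alpha-1}$ is integrable at $0$ because $\alpha>0$. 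So $\int_0^1 \tau^{\alpha-1}|E_{\alpha,\alpha}(\lambda\tau^\alpha)|\,d\tau=:M_1<\infty$.

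For the tail, I would invoke the standard asymptotic estimate (see e.g.\ Podlubny \cite{Podlubny} or Diethelm \cite{Kai}): since $\lambda\in\Lambda_\alpha^s$, i.e.\ $|\arg(\lambda)|>\alpha\pi/2$, there is a constant $C>0$ such that
\[
|E_{\alpha,\alpha}(\lambda\tau^\alpha)|\leq \frac{C}{1+|\lambda|\tau^\alpha}\leq \frac{C}{|\lambda|}\,\tau^{-\alpha}\qquad\text{for all }\tau\geq 1.
\]
Here the point is that the leading term $z^{-1}\cdot(-1)/\Gamma(-\alpha+\alpha)$ of the expansion of $E_{\alpha,\alpha}(z)$ for $z$ in the left sector actually vanishes (the coefficient involves $1/\Gamma(0)=0$), so $E_{\alpha,\alpha}(z)=O(|z|^{-2})$ as $|z|\to\infty$ in that sector; even the cruder bound $O(|z|^{-1})$ above suffices. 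Consequently
\[
\int_1^\infty \tau^{\alpha-1}|E_{\alpha,\alpha}(\lambda\tau^\alpha)|\,d\tau
\leq \frac{C}{|\lambda|}\int_1^\infty \tau^{\alpha-1}\tau^{-\alpha}\,d\tau
= \frac{C}{|\lambda|}\int_1^\infty \tau^{-1}\,d\tau,
\]
which diverges — so I must use the sharper decay $|E_{\alpha,\alpha}(\lambda\tau^\alpha)|\leq C'|\lambda|^{-2}\tau^{-2\alpha}$, giving $\int_1^\infty\tau^{\alpha-1}\tau^{-2\alpha}\,d\tau=\int_1^\infty\tau^{-\alpha-1}\,d\tau=1/\alpha<\infty$. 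Then setting $C_3:=M_1+C'|\lambda|^{-2}/\alpha$ (or any strictly larger constant, to get the strict inequality in the statement) completes the argument.

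The main obstacle is pinning down the correct order of decay of $E_{\alpha,\alpha}(z)$ for $z$ in the sector $|\arg(z-\pi)|<\pi-\alpha\pi/2$ (equivalently $-z$ in $\Lambda_\alpha^s$-type region): one needs $O(|z|^{-2})$, not merely $O(|z|^{-1})$, and this hinges on the vanishing of the $k=1$ term $z^{-1}/\Gamma(\beta-\alpha)$ in the asymptotic series at $\beta=\alpha$. I would cite the asymptotic expansion of $E_{\alpha,\beta}$ from \cite{Podlubny} (Formula (1.143) there, or Theorem 1.4 in \cite{Kai}) and simply observe that the $z^{-1}$-coefficient is $1/\Gamma(0)=0$, so the expansion begins at order $z^{-2}$. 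A minor technical point is making the estimate uniform in $\tau\geq 1$ rather than only for large $\tau$; this is handled by absorbing the transitional range $\tau\in[1,R]$ into the compact estimate as in the first step, enlarging $M_1$ accordingly.
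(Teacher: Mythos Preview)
Your argument is correct. The decisive observation is exactly the one you isolate: in the asymptotic expansion
\[
E_{\alpha,\beta}(z)=-\sum_{k=1}^{p}\frac{z^{-k}}{\Gamma(\beta-\alpha k)}+O(|z|^{-p-1}),\qquad \mu\leq |\arg z|\leq \pi,\ \ \frac{\alpha\pi}{2}<\mu<\alpha\pi,
\]
the $k=1$ term vanishes when $\beta=\alpha$ because $1/\Gamma(0)=0$, so $|E_{\alpha,\alpha}(\lambda\tau^\alpha)|=O(\tau^{-2\alpha})$ along the ray $\{\lambda\tau^\alpha:\tau>0\}\subset\{|\arg z|>\alpha\pi/2\}$. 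That is precisely what is needed to make the tail $\int_1^\infty\tau^{\alpha-1}\cdot\tau^{-2\alpha}\,d\tau=\alpha^{-1}$ finite; your remark that the $O(|z|^{-1})$ bound would not suffice is to the point. The handling of the compact piece and the absorption of the transitional range $[1,R]$ are routine and fine. (One small slip of notation: you wrote ``$|\arg(z-\pi)|$'' where you meant ``$|\arg z|>\alpha\pi/2$''; the parenthetical makes your intent clear.)

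The paper itself does not give a proof here: it simply cites \cite[Theorem 3(ii)]{Cong_Tuan_2018}. Your write-up therefore supplies what the paper outsources. The underlying mechanism in the cited reference is the same asymptotic expansion you use, so substantively the approaches coincide; the difference is only that your version is self-contained and makes explicit why the crude $O(|z|^{-1})$ estimate fails while the sharper $O(|z|^{-2})$ bound, available specifically because $\beta=\alpha$, succeeds.
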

\begin{proof}
See \cite[Theorem 3(ii)]{Cong_Tuan_2018}.
\end{proof}
\subsection{Proof of Theorem \ref{main result}}
We follow the approach in our preceding paper \cite{CST_16} to complete the proof of Theorem \ref{main result}. This proof contain two main steps:
\begin{itemize}
\item Transformation of the linear part: we transform the linear part of \eqref{nonLinear} to a matrix which is "very close" to a diagonal matrix. This technical step help us to reduce the difficulty in the estimation of the matrix valued Mittag-Leffler function in the next step.
\item Construction of an appropriate Lyapunov-Perron operator: we will present a family of operators with the property that any solution of the nonlinear system \eqref{nonLinear} can be interpreted as a fixed point of these operators. Furthermore, these operators are contractive in a suitable space hence the fixed points of these operators can be estimated.
\end{itemize}
\bigskip
{\bf Transformation of the linear part}
\bigskip

By virtue \cite[Theorem 6.37, pp.~146]{Shilov}, we can find a nonsingular matrix $T\in\C^{d\times d}$ such that
\[
T^{-1}A T=\hbox{diag}(A_1,\dots,A_n),
\]
where for $i=1,\dots,n$ the block $A_i$ has the form
\[
A_i=\lambda_i\, \id_{d_i\times d_i}+\delta_i\, N_{d_i\times d_i},
\]
with $\lambda_i$ is an eigenvalue, $\delta_i\in\{0,1\}$ and the nilpotent matrix $N_{d_i\times d_i}$ is given by
\[
N_{d_i\times d_i}:=
\left(
      \begin{array}{*7{c}}
      0  &     1         &    0      & \cdots        &  0        \\
        0        & 0    &    1     &   \cdots      &              0\\
        \vdots &\vdots        &  \ddots         &          \ddots &\vdots\\
        0 &    0           &\cdots           &  0 &          1 \\

        0& 0  &\cdots                                          &0         & 0 \\
      \end{array}
    \right)_{d_i \times d_i}.
\]
Let $\eta$ be an arbitrary but fixed positive number. Applying the transformation $P_i:=\textup{diag}(1,\eta,\dots,\eta^{d_i-1})$ leads to
\begin{equation*}
P_i^{-1} A_i P_i=\lambda_i\, \id_{d_i\times d_i}+\eta_i\, N_{d_i\times d_i},
\end{equation*}
$\eta_i\in \{0,\eta\}$. Hence, under the transformation $y:=(TP)^{-1}x$ system \eqref{nonLinear} becomes
\begin{equation}\label{NewSystem}
^{C}D_{0+}^\alpha y(t)=\hbox{diag}(J_1,\dots,J_n)y(t)+h(y(t)),
\end{equation}
where $J_i:=\lambda_i \id_{d_i\times d_i}$ for $i=1,\dots,n$ and the function $h$ is given by
\begin{equation*}\label{Eq3}
h(y):=\text{diag}(\eta_1N_{d_1\times d_1},\dots,\eta_nN_{d_n\times d_n})y+(TP)^{-1}f(TPy).
\end{equation*}
\begin{remark}[see {\cite[Remark 3.2]{CST_16}}]\label{Remark1}
The map $x\mapsto \text{diag}(\eta_1N_{d_1\times d_1},\dots,\eta_nN_{d_n\times d_n})x$ is a Lipschitz continuous function with Lipschitz constant $\eta$. Thus, by \eqref{Lipschitz condition} we have
\[
h(0)=0,\qquad \lim_{r\to 0}\ell_h(r)=
\left\{
\begin{array}{ll}
\eta & \hbox{if there exists } \eta_i=\eta,\\[1ex]
0 & \hbox{otherwise}.
\end{array}
\right.
\]
\end{remark}
\begin{remark}\label{Remark2}
The type of stability of the trivial solution to equations \eqref{nonLinear} and \eqref{NewSystem} is the same, i.e., they are both stable (asymptotic/Mittag-Leffler stable) or not stable (asymptotic/Mittag-Leffler stable).
\end{remark}

\bigskip
\noindent {\bf Construction of an appropriate Lyapunov-Perron operator}
\bigskip

We now concentrate only on the equation \eqref{NewSystem} and introduce a Lyapunov-Perron operator associated with this equation.

For any $x=(x^1,\dots,x^n)\in \C^{d}=\C^{d_1}\times\dots\times \C^{d_n}$, the operator $\mathcal{T}_{x}: C_\infty([0,\infty),\C^d)\rightarrow C_\infty([0,\infty),\C^d)$ is defined by
\[
(\mathcal{T}_{x}\xi)(t)=((\mathcal{T}_{x}\xi)^1(t),\dots,(\mathcal{T}_{x}\xi)^n(t))\qquad\hbox{for } t\in\R_{\geq 0},
\]
where for $i=1,\dots,n$
\begin{eqnarray*}
(\mathcal{T}_{x}\xi)^i(t)
&=&
E_\alpha(t^\alpha J_i)x^i+\\
&&
\int_0^t (t-\tau)^{\alpha-1}E_{\alpha,\alpha}((t-\tau)^\alpha J_i)h^i(\xi(\tau))\;d\tau,
\end{eqnarray*}
is called the \emph{Lyapunov-Perron operator associated with \eqref{NewSystem}}. The relationship between a fixed point of the operator $\mathcal{T}_x(\cdot)$ and a solution to the equation \eqref{NewSystem} is described in the lemma below.
\begin{lemma}\label{Var_Const_Form}
Consider \eqref{NewSystem} and assume that the function $h(\cdot)$ is global Lipschitz continuous. Let $x\in\C^{d}$ be arbitrary and $\xi:\R_{\geq 0}\rightarrow \C^{d}$ be a continuous function satisfying that $\xi(0)=x$. Then, the following statements are equivalent:
\begin{itemize}
\item [(i)] $\xi$ is a solution of \eqref{NewSystem} satisfying the initial condition $x(0)=x$.
\item [(ii)] $\xi$ is a fixed point of the operator $\mathcal {T}_{x}$.
\end{itemize}
\end{lemma}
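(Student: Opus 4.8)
The plan is to establish the equivalence by showing that both statements (i) and (ii) are, via Lemma~\ref{equi_l}, equivalent to the same Volterra-type integral equation. The natural route is: first translate the solution property of~\eqref{NewSystem} into an integral equation using the variation-of-constants formalism for the decoupled linear blocks, then recognize the resulting integral equation as precisely the fixed-point equation $\xi = \mathcal{T}_x \xi$. Concretely, I would argue block by block, since $\hbox{diag}(J_1,\dots,J_n)$ is block-diagonal and $h = (h^1,\dots,h^n)$ splits accordingly: it suffices to treat each component $\xi^i$ with the scalar-multiple block $J_i = \lambda_i\,\id_{d_i\times d_i}$, and the inhomogeneity $t \mapsto h^i(\xi(t))$, which is a fixed continuous function on $\R_{\geq 0}$ once $\xi$ is fixed.

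For the implication (i)$\Rightarrow$(ii): if $\xi$ solves~\eqref{NewSystem} with $\xi(0) = x$, then on any compact interval $[0,T]$ the function $\xi$ satisfies the linear Caputo system $^{C}D_{0+}^\alpha \xi^i(t) = J_i \xi^i(t) + h^i(\xi(t))$ with known forcing term $g_i(t) := h^i(\xi(t))$, which is continuous because $h$ is continuous (being global Lipschitz by hypothesis) and $\xi$ is continuous. Applying Lemma~\ref{equi_l} (or the standard variation-of-constants formula for linear FDEs with a continuous inhomogeneity, which one obtains by expanding $E_{\alpha,\alpha}$ and integrating termwise — this is where I would cite the Mittag-Leffler matrix identities and the fact that $E_\alpha(t^\alpha J_i)$ solves the homogeneous linear equation), one gets
\[
\xi^i(t) = E_\alpha(t^\alpha J_i) x^i + \int_0^t (t-\tau)^{\alpha-1} E_{\alpha,\alpha}((t-\tau)^\alpha J_i)\, h^i(\xi(\tau))\, d\tau,
\]
which is exactly $(\mathcal{T}_x \xi)^i(t)$. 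Since this holds for every $t \geq 0$ (the interval $T$ being arbitrary) and every $i$, we conclude $\xi = \mathcal{T}_x\xi$; note also that $\mathcal{T}_x\xi \in C_\infty([0,\infty),\C^d)$ requires $\xi$ itself to be bounded, so strictly I would either restrict the statement to bounded $\xi$ or reinterpret $\mathcal{T}_x$ on $C([0,\infty),\C^d)$ — I would check which convention the paper intends and phrase accordingly. For the converse (ii)$\Rightarrow$(i): if $\xi = \mathcal{T}_x\xi$, then $\xi$ satisfies the displayed integral equation; evaluating at $t=0$ gives $\xi(0) = E_\alpha(0)x = x$ (using $E_\alpha(0) = \id$ since the $k=0$ term is $\id/\Gamma(1) = \id$), and applying $^{C}D_{0+}^\alpha$ to both sides — justified because the right-hand side, as an image under $\mathcal{T}_x$ of a continuous function, has continuous Caputo derivative by the regularity in Theorem~\ref{Theorem1}, the homogeneous term contributing $J_i E_\alpha(t^\alpha J_i)x^i$ and the convolution term contributing $h^i(\xi(t)) + J_i \int_0^t(t-\tau)^{\alpha-1}E_{\alpha,\alpha}(\cdots)h^i(\xi(\tau))\,d\tau$ — recovers $^{C}D_{0+}^\alpha\xi^i(t) = J_i\xi^i(t) + h^i(\xi(t))$, i.e.\ $\xi$ solves~\eqref{NewSystem}.

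The main obstacle is the rigorous justification of the variation-of-constants formula for the linear block system with a general continuous (rather than Lipschitz or Hölder) inhomogeneity, and dually the termwise differentiation of the convolution $\int_0^t (t-\tau)^{\alpha-1} E_{\alpha,\alpha}((t-\tau)^\alpha J_i) g_i(\tau)\, d\tau$ under the Caputo operator. One clean way around this is to reduce everything to Lemma~\ref{equi_l}: verify directly that the function on the right-hand side of the displayed equation satisfies the Volterra equation $y(t) = x + \frac{1}{\Gamma(\alpha)}\int_0^t (t-\tau)^{\alpha-1}\big(J_i y^i(\tau) + h^i(\xi(\tau))\big)\, d\tau$ — this is a Fubini/Dirichlet-type interchange of the two convolutions plus the Mittag-Leffler recursion $E_\alpha(z) = 1 + z E_{\alpha,\alpha+1}(z)$ and similar identities relating $E_\alpha, E_{\alpha,\alpha}, E_{\alpha,\alpha+1}$ — and then invoke Lemma~\ref{equi_l} to pass between the Volterra equation and~\eqref{NewSystem}. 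This keeps the analytic technicalities confined to a single computation with Mittag-Leffler functions, which is routine, and makes the equivalence with both (i) and (ii) transparent.
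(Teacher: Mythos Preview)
Your proposal is correct and takes essentially the same approach as the paper: the paper's proof is a one-line citation, ``The assertion follows from the variation of constants formula for fractional differential equations, see e.g., \cite{Cong_Tuan_2016},'' and your argument is precisely an unpacking of what that variation-of-constants formula says block by block. Your observation about the domain of $\mathcal{T}_x$ (bounded versus merely continuous $\xi$) is a fair nitpick on the paper's phrasing, but it does not affect the substance of the lemma as used downstream.
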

\begin{proof}
The assertion follows from the variation of constants formula for fractional differential equations, see e.g., \cite{Cong_Tuan_2016}.
\end{proof}
Our novel contribution in the present work is to combine the approach in \cite{CST_16} and a new weighted norm as follows. In $C_\infty([0,\infty),\C^d)$ we define a function $\|\cdot\|_w$ by
\[
\|x\|_w=\max\{\sup_{t\in [0,1]}\|x(t)\|,\;\sup_{t\geq 1}t^\alpha\|x(t)\|\}.
\]
Then $C_w:=\{x\in C_\infty([0,\infty),\C^d):\|x\|_w<\infty\}$ is also a Banach space with the norm $\|\cdot\|_w$.

Next, we provide some estimates concerning the operator $\mathcal{T}_{x}$ in the space $C_w$. 
\begin{proposition}\label{Prp2} Consider system \eqref{NewSystem} and suppose that
\[
\lambda_i\in \Lambda_\alpha^s,\qquad i=1,\dots,n,
\]
where $\lambda_1,\dots,\lambda_n$ are eigenvalues of $A$. Then, there exists a constant $C(\alpha,A)$ depending on $\alpha$ and $\lambda:=(\lambda_1,\dots,\lambda_n)$ such that for all $x,\widehat x\in\C^{d}$ and $\xi,\widehat\xi\in C_w$ the following inequality holds
\begin{eqnarray*}\label{Contraction}
\notag \|\mathcal T_{x}\xi-\mathcal T_{\widehat x}\widehat\xi\|_w
& \leq &
\max_{1\le i \le n}\{\sup_{t\in [0,1]}|E_\alpha(\lambda_i t^\alpha)|+\sup_{t\geq 1}t^\alpha|E_\alpha(\lambda_i t^\alpha)|\} \|x-\widehat x\|\\
&+&C(\alpha,A)\;
\ell_h(\max\{\|\xi\|_\infty,\|\widehat\xi\|_\infty\})\|\xi-\widehat\xi\|_w.
\end{eqnarray*}
Consequently, $\mathcal T_{x}$ considered as an operator on the Banach space $C_w$ endowed with the norm $\|\cdot\|_w$ is well-defined and
\begin{equation}\label{well_defined}
\|\mathcal T_{x}\xi-\mathcal T_{x}\widehat\xi\|_w \leq C(\alpha,A)\; \ell_h(\max\{\|\xi\|_\infty,\|\widehat\xi\|_\infty)\}\|\xi-\widehat\xi\|_w.
\end{equation}
\end{proposition}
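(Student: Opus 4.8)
The plan is to exploit that every block $J_i=\lambda_i\,\id_{d_i\times d_i}$ is a scalar multiple of the identity, so that $E_\alpha(t^\alpha J_i)=E_\alpha(\lambda_i t^\alpha)\,\id_{d_i\times d_i}$ and $E_{\alpha,\alpha}((t-\tau)^\alpha J_i)=E_{\alpha,\alpha}(\lambda_i(t-\tau)^\alpha)\,\id_{d_i\times d_i}$; the operator $\mathcal{T}_x$ therefore acts componentwise with \emph{scalar} Mittag-Leffler multipliers, and it is enough to estimate each
\begin{align*}
(\mathcal{T}_{x}\xi)^i(t)-(\mathcal{T}_{\widehat x}\widehat\xi)^i(t)
&=E_\alpha(\lambda_i t^\alpha)(x^i-\widehat x^i)\\
&\quad+\int_0^t(t-\tau)^{\alpha-1}E_{\alpha,\alpha}(\lambda_i(t-\tau)^\alpha)\bigl(h^i(\xi(\tau))-h^i(\widehat\xi(\tau))\bigr)\,d\tau
\end{align*}
in $\|\cdot\|_w$ and then combine the $n$ componentwise estimates (routinely, using $\max\{a,b\}\le a+b$ for nonnegative $a,b$ whenever a maximum over the two defining suprema of $\|\cdot\|_w$, or over the blocks, has to be turned into a sum).

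For the first summand, $\|E_\alpha(\lambda_i t^\alpha)(x^i-\widehat x^i)\|\le|E_\alpha(\lambda_i t^\alpha)|\,\|x-\widehat x\|$, and its $\|\cdot\|_w$-norm is controlled on $[0,1]$ by $\sup_{t\in[0,1]}|E_\alpha(\lambda_i t^\alpha)|$ and on $[1,\infty)$ by $\sup_{t\ge1}t^\alpha|E_\alpha(\lambda_i t^\alpha)|$, the latter being finite because $|E_\alpha(\lambda_i t^\alpha)|\le C_1 E_\alpha(-t^\alpha)=O(t^{-\alpha})$ by Lemma~\ref{key_est}(i) and the asymptotics of $E_\alpha(-t^\alpha)$; this reproduces the first term of the asserted inequality. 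For the second summand, since $\|\xi(\tau)\|,\|\widehat\xi(\tau)\|\le\rho:=\max\{\|\xi\|_\infty,\|\widehat\xi\|_\infty\}$ for all $\tau$, the Lipschitz property of $h$ on $B_{\C^d}(0,\rho)$ gives $\|h^i(\xi(\tau))-h^i(\widehat\xi(\tau))\|\le\ell_h(\rho)\|\xi(\tau)-\widehat\xi(\tau)\|$, while the very definition of $\|\cdot\|_w$ gives $\|\xi(\tau)-\widehat\xi(\tau)\|\le\psi(\tau)\|\xi-\widehat\xi\|_w$ with $\psi(\tau):=1$ for $\tau\in[0,1]$ and $\psi(\tau):=\tau^{-\alpha}$ for $\tau\ge1$ --- note $\psi(\tau)\le1$ and $\psi(\tau)\le\tau^{-\alpha}$ for all $\tau>0$. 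Hence the second summand is at most $\ell_h(\rho)\,\|\xi-\widehat\xi\|_w\,I_i(t)$ with
\[
I_i(t):=\int_0^t(t-\tau)^{\alpha-1}\,|E_{\alpha,\alpha}(\lambda_i(t-\tau)^\alpha)|\,\psi(\tau)\,d\tau,
\]
and everything reduces to bounding $\sup_{t\in[0,1]}I_i(t)$ and $\sup_{t\ge1}t^\alpha I_i(t)$ by constants depending only on $\alpha$ and $\lambda=(\lambda_1,\dots,\lambda_n)$; one then sets $C(\alpha,A):=\max_{1\le i\le n}\max\{\sup_{t\in[0,1]}I_i(t),\,\sup_{t\ge1}t^\alpha I_i(t)\}$.

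The bound on $[0,1]$ is immediate: since $\psi\le1$, the substitution $s=t-\tau$ gives $I_i(t)\le\int_0^\infty s^{\alpha-1}|E_{\alpha,\alpha}(\lambda_i s^\alpha)|\,ds<C_3$ by the lemma immediately preceding the proposition. The weighted estimate $\sup_{t\ge1}t^\alpha I_i(t)<\infty$ is the heart of the matter, and I expect it to be the main obstacle. Using $\psi(\tau)\le\tau^{-\alpha}$, $I_i(t)\le\int_0^t(t-\tau)^{\alpha-1}|E_{\alpha,\alpha}(\lambda_i(t-\tau)^\alpha)|\,\tau^{-\alpha}\,d\tau$; combining the pointwise comparison $|E_{\alpha,\alpha}(\lambda_i s^\alpha)|\le\kappa_i\,E_{\alpha,\alpha}(-s^\alpha)$ valid for $\lambda_i\in\Lambda_\alpha^s$ (proved by the same argument as Lemma~\ref{key_est}(i), from the asymptotic expansion of the Mittag-Leffler function and the positivity of $E_{\alpha,\alpha}(-s^\alpha)$) with Lemma~\ref{key_est}(ii) yields $t^\alpha I_i(t)\le\kappa_i C_2$ for every $t\ge0$. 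Alternatively, avoiding that comparison, one splits $\int_0^t=\int_0^{t/2}+\int_{t/2}^t$ for $t\ge2$: on $[0,t/2]$ one has $(t-\tau)^{\alpha-1}|E_{\alpha,\alpha}(\lambda_i(t-\tau)^\alpha)|\le C t^{-1-\alpha}$ (since $t-\tau\ge t/2$ and $s^{\alpha-1}|E_{\alpha,\alpha}(\lambda_i s^\alpha)|=O(s^{-1-\alpha})$ as $s\to\infty$) while $\int_0^{t/2}\tau^{-\alpha}\,d\tau=O(t^{1-\alpha})$, giving an $O(t^{-2\alpha})$ contribution; on $[t/2,t]$, $\tau^{-\alpha}\le(t/2)^{-\alpha}$ pulls out a factor $O(t^{-\alpha})$ and the remaining convolution integral is again $\le C_3$; the compact range $t\in[1,2]$ is trivial. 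Either way, combining the blocks and the two summands gives the displayed inequality.

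Finally, the two consequences are immediate: taking $\widehat x=x$ makes the first term vanish and produces exactly \eqref{well_defined}; and taking $\widehat\xi\equiv0$ --- legitimate because $h(0)=0$ makes $\mathcal{T}_x0$ equal to the pure Mittag-Leffler function $t\mapsto\bigl(E_\alpha(\lambda_i t^\alpha)x^i\bigr)_{i=1}^{n}$, which lies in $C_w$ by the $\Lambda_\alpha^s$-asymptotics --- gives $\|\mathcal{T}_x\xi\|_w\le\|\mathcal{T}_x\xi-\mathcal{T}_x0\|_w+\|\mathcal{T}_x0\|_w<\infty$, so that $\mathcal{T}_x$ indeed maps $C_w$ into itself (its continuity being already built into the definition of the Lyapunov--Perron operator). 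In summary, the only genuinely delicate point is that convolving the weakly singular kernel $(t-\tau)^{\alpha-1}E_{\alpha,\alpha}(\lambda_i(t-\tau)^\alpha)$ against the decaying weight $\tau^{-\alpha}$ still yields a function decaying at the rate $t^{-\alpha}$, which is precisely what Lemma~\ref{key_est}(ii) is designed to deliver.
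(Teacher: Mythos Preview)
Your proof is correct and follows essentially the same route as the paper: componentwise estimates using the scalar Mittag-Leffler multipliers, the bound $\|(\xi-\widehat\xi)(\tau)\|\le\tau^{-\alpha}\|\xi-\widehat\xi\|_w$, the comparison $|E_{\alpha,\alpha}(\lambda_i s^\alpha)|\le C_{\lambda_i}E_{\alpha,\alpha}(-s^\alpha)$ (which the paper also invokes, citing Lemma~\ref{key_est}(i)), and then Lemma~\ref{key_est}(ii) for the weighted convolution estimate on $[1,\infty)$. Your alternative splitting argument for $\sup_{t\ge1}t^\alpha I_i(t)$ is a nice self-contained variant that the paper does not give, but your primary argument matches the paper's.
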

\begin{proof}
For $i=1,\dots,n$, we get
\begin{align*}
\|(\mathcal{T}_x \xi)^i(t)-(\mathcal{T}_{\widehat{x}} \widehat{\xi})^i(t)\|
&\le\|x-\widehat x\||E_\alpha (\lambda_i t^\alpha)|\\
&\hspace{-3cm}+\ell_h(\max\{\|\xi\|_\infty,\|\widehat{\xi}\|_\infty\})
\int_0^t(t-\tau)^{\alpha-1} |E_{\alpha,\alpha}(\lambda_i (t-\tau)^\alpha)|\|(\xi-\widehat{\xi})(\tau)\|\;d\tau.
\end{align*}
In the case $t\in [0,1]$, we have
\begin{align}
\notag \sup_{t\in [0,1]}\|(\mathcal T_{x}\xi-\mathcal T_{x}\widehat\xi)^i(t)\|&\leq \sup_{t\in [0,1]}|E_\alpha(\lambda_i t^\alpha)|\|x-\widehat{x}\|\\
&\hspace{-3cm}+\ell_h(\max\{\|\xi\|_\infty,\|\widehat{\xi}\|_\infty\})\int_0^\infty u^{\alpha-1}|E_{\alpha,\alpha}(-\lambda_i u^\alpha)|\;du\;\|\xi-\widehat\xi\|_w.\label{est1}
\end{align}
Furthermore,
\begin{align}
\notag \sup_{t\geq 1}t^\alpha\|(\mathcal T_{x}\xi-\mathcal T_{x}\widehat\xi)^i(t)\|&\leq \sup_{t\geq 1}t^\alpha |E_\alpha(\lambda_i t^\alpha)|\;\|x-\widehat{x}\|\\
&\hspace{-4cm}+C_{\lambda_i}\ell_h(\max\{\|\xi\|_\infty,\|\widehat{\xi}\|_\infty\})\sup_{t\geq 1}t^\alpha\int_0^t (t-\tau)^{\alpha-1}E_{\alpha,\alpha}(-(t-\tau)^\alpha)\tau^{-\alpha}\;d\tau \|\xi-\widehat\xi\|_w,\label{est2}
\end{align}
where $C_{\lambda_i}$ is a constant chosen as in Lemma \ref{key_est} (i). Now by combining Lemma \ref{key_est}, \eqref{est1} and \eqref{est2}, we have
\begin{align*}
\|\mathcal T_x \xi)-\mathcal T_{\widehat x} \widehat \xi\|_w&\leq  \max_{1\le i \le n}\{\sup_{t\in [0,1]}|E_\alpha(\lambda_i t^\alpha)|+\sup_{t\geq 1}t^\alpha|E_\alpha(\lambda_i t^\alpha)|\} \|x-\widehat x\|\\
&+C(\alpha,A)\;
\ell_h(\max\{\|\xi\|_\infty,\|\widehat\xi\|_\infty\})\|\xi-\widehat\xi\|_w,
\end{align*}
where 
\begin{align*}
C(\alpha,A)&:=\max_{1\leq i\leq n}\int_0^\infty u^{\alpha-1}|E_{\alpha,\alpha}(\lambda_i u^\alpha)|\;du\\
&\hspace{1cm}+C_\lambda\sup_{t\geq 1}t^\alpha\int_0^t (t-\tau)^{\alpha-1}E_{\alpha,\alpha}(-(t-\tau)^\alpha)\tau^{-\alpha}\;d\tau
\end{align*}
with $C_\lambda:=\max\{C_{\lambda_1},\dots,C_{\lambda_n}\}$. The proof is complete.
\end{proof}
We have showed that the Lyapunov-Perron operator $\mathcal{T}_x(\cdot)$ is well-defined and Lipschitz continuous and that the constant $C(\alpha,A)$ is independent of the constant $\eta$. From now on, we choose and fix the constant $\eta$ as $\eta:=\frac{1}{2C(\alpha,A)}$. 
\begin{lemma}\label{lemma6}
The following statements hold:
\begin{itemize}
\item [(i)] There exists $r>0$ such that
\begin{equation}\label{Eq7a}
q:=C(\alpha,A)\;  \ell_h(r) < 1.
\end{equation}
\item [(ii)] Choose and fix $r>0$ satisfying \eqref{Eq7a}. Define
\begin{equation}\label{Eq7b}
r^*:=\frac{r(1-q)}{\max_{1\le i\le n}\{\sup_{t\in [0,1]}|E_\alpha(\lambda_it^\alpha)|+\sup_{t\geq 1}t^\alpha|E_\alpha(\lambda_i t^\alpha)|\}}.
\end{equation}
Let  $B_{C_{w}}(0,r):=\{\xi\in C_\infty([0,\infty),\C^d):\left||\xi|\right|_w\le r\}$. Then, for any $x\in B_{\C^{d}}(0,r^*)$ we have $\mathcal T_{x} (B_{C_{w}}(0,r))\subset B_{C_{w}}(0,r)$ and
\begin{equation*}\label{LipschitzContinuity}
\|\mathcal T_{x}\xi-\mathcal T_{x}\widehat {\xi}\|_w
\leq
q\|\xi-\widehat{\xi}\|_w\quad\hbox{ for all } \xi,\widehat{\xi}\in B_{C_{w}}(0,r).
\end{equation*}
\end{itemize}

\end{lemma}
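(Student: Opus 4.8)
The plan is to prove the two assertions of Lemma \ref{lemma6} as a direct consequence of the Lipschitz estimate \eqref{well_defined} established in Proposition \ref{Prp2}, together with the defining property \eqref{Lipschitz condition} of $f$ transmitted to $h$ via Remark \ref{Remark1}.

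\textbf{Part (i).} By Remark \ref{Remark1} and the choice $\eta = \frac{1}{2C(\alpha,A)}$, we have $\lim_{r\to 0}\ell_h(r) \leq \eta = \frac{1}{2C(\alpha,A)}$. Hence $\limsup_{r\to 0} C(\alpha,A)\,\ell_h(r) \leq \frac{1}{2} < 1$, so there exists $r>0$ with $q := C(\alpha,A)\,\ell_h(r) < 1$. (Note $C(\alpha,A) > 0$, so this is not a degenerate choice.) This step is essentially immediate; the only subtlety is that $\ell_h$ need not tend to $0$ — it may tend to $\eta$ — but our prior choice of $\eta$ makes the product still strictly below $1$.

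\textbf{Part (ii), self-mapping.} Fix $r$ as in (i) and let $r^*$ be as in \eqref{Eq7b}. Take $x \in B_{\C^d}(0,r^*)$ and $\xi \in B_{C_w}(0,r)$. First I would compute $\|\mathcal{T}_x\xi\|_w$ by writing $\mathcal{T}_x\xi = (\mathcal{T}_x\xi - \mathcal{T}_x 0) + \mathcal{T}_x 0$, where here $0$ denotes the zero function. Since $h(0)=0$, one has $(\mathcal{T}_x 0)^i(t) = E_\alpha(t^\alpha J_i)x^i$, so by the same two supremum estimates used in the proof of Proposition \ref{Prp2},
\[
\|\mathcal{T}_x 0\|_w \leq \max_{1\le i\le n}\Bigl\{\sup_{t\in[0,1]}|E_\alpha(\lambda_i t^\alpha)| + \sup_{t\ge 1}t^\alpha|E_\alpha(\lambda_i t^\alpha)|\Bigr\}\,\|x\| \leq r(1-q)
\]
by the definition of $r^*$. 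For the difference term, apply \eqref{well_defined} with $\widehat\xi = 0$: since $\|\xi\|_\infty \leq \|\xi\|_w \leq r$ and $\|0\|_\infty = 0$, and $\ell_h$ is nondecreasing, we get $\|\mathcal{T}_x\xi - \mathcal{T}_x 0\|_w \leq C(\alpha,A)\,\ell_h(r)\,\|\xi\|_w = q\,\|\xi\|_w \leq qr$. Adding the two bounds gives $\|\mathcal{T}_x\xi\|_w \leq qr + r(1-q) = r$, i.e. $\mathcal{T}_x\xi \in B_{C_w}(0,r)$.

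\textbf{Part (ii), contraction.} For $\xi,\widehat\xi \in B_{C_w}(0,r)$ we have $\max\{\|\xi\|_\infty,\|\widehat\xi\|_\infty\} \leq r$, so applying \eqref{well_defined} and monotonicity of $\ell_h$ yields $\|\mathcal{T}_x\xi - \mathcal{T}_x\widehat\xi\|_w \leq C(\alpha,A)\,\ell_h(r)\,\|\xi-\widehat\xi\|_w = q\,\|\xi-\widehat\xi\|_w$, which is the desired contraction since $q<1$. I do not anticipate a genuine obstacle here: the whole lemma is a packaging step, and the only point requiring care is making sure the ``$\ell_h(\max\{\|\xi\|_\infty,\|\widehat\xi\|_\infty\})$'' appearing in \eqref{well_defined} can be bounded by $\ell_h(r)$ — this uses that $\ell_h$ is monotone nondecreasing in $r$ (clear from its definition as a supremum over an increasing family of balls) and that $B_{C_w}(0,r)$ controls the sup-norm by $r$. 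A minor bookkeeping subtlety is that \eqref{well_defined} was stated under the hypothesis that $h$ is globally Lipschitz (Lemma \ref{Var_Const_Form}), whereas $h$ here is only locally Lipschitz; this is handled, as in \cite{CST_16}, by first replacing $h$ outside $B_{\R^d}(0,r)$ with a global Lipschitz modification that agrees with $h$ on the ball — a standard cutoff that does not affect solutions staying in $B_{C_w}(0,r)$.
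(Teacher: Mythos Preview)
Your proof is correct and follows essentially the same route as the paper: part (i) via Remark \ref{Remark1} and the choice $\eta=\frac{1}{2C(\alpha,A)}$, and part (ii) by specializing Proposition \ref{Prp2} to $\widehat x=0$, $\widehat\xi=0$ (your explicit decomposition $\mathcal T_x\xi=(\mathcal T_x\xi-\mathcal T_x 0)+\mathcal T_x 0$ is just this specialization written out). One small correction: the caveat about global Lipschitz continuity is misplaced---Proposition \ref{Prp2} and inequality \eqref{well_defined} do not require $h$ to be globally Lipschitz (that hypothesis belongs to Lemma \ref{Var_Const_Form}, which is invoked only later in the proof of Theorem \ref{main result}), so no cutoff is needed for the present lemma.
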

\begin{proof}
(i) By Remark \ref{Remark1}, $\lim_{r\to 0}\ell_h(r)\leq \eta$. Since $\eta C(\alpha,A)=\frac{1}{2}$, the assertion (i) is proved.

(ii) Let $x\in B_{\C^{d}}(0,r^*)$ and $\xi\in B_{C_w}(0,r)$. According to \eqref{Contraction} in Proposition \ref{Prp2}, we obtain that
\begin{eqnarray*}
\|\mathcal T_{x}\xi\|_w
& \leq & \;\max_{1\le i \le n}\{\sup_{t\in [0,1]}|E_\alpha(\lambda_i t^\alpha)|+\sup_{t\geq 1}t^\alpha|E_\alpha(\lambda_i t^\alpha)|\}\|x\|+ C(\alpha,A)\,\ell_{h}(r)\|\xi\|_{w}\\
& \leq& \;(1-q)r+qr,
\end{eqnarray*}
which proves that $\mathcal T_{x}(B_{C_{w}}(0,r))\subset B_{C_{w}}(0,r)$. Furthermore, by  Proposition \ref{Prp2} and part (i) for all $x\in B_{\C^d}(0,r^*)$ and $\xi,\widehat{\xi}\in B_{C_w}(0,r)$ we have
\begin{eqnarray*}
\|\mathcal T_{x}\xi-\mathcal T_{x}\widehat{\xi}\|_w
&\leq&
C(\alpha,A)\ell_{h}(r)\;\|\xi-\widehat{\xi}\|_w\\[1.5ex]
&\leq &
q\|\xi-\widehat{\xi}\|_w,
\end{eqnarray*}
which ends the proof.
\end{proof}
\begin{proof}[Proof of Theorem \ref{main result}]
Due to Remark \ref{Remark2}, it is sufficient to prove the Mittag-Leffler  stability for the trivial solution of \eqref{NewSystem}. For this purpose, let $r^*$ be defined as in \eqref{Eq7b}. Let $x\in B_{\C^d}(0,r^*)$ be arbitrary. Using Lemma \ref{lemma6} and the Contraction Mapping Principle, there exists a unique fixed point $\xi \in B_{C_w}(0,r)$ of $\mathcal{T}_x$. This fixed point is also the unique solution of \eqref{NewSystem} with the initial condition $\xi(0)=x$ (see Lemma \ref{Var_Const_Form}). Together existence and uniqueness of solutions for initial value problems for the equation \eqref{NewSystem} in a neighborhood of the origin, this shows that the solution $0$ is stable in the Lyapunov's sense. Moreover,
\[
\sup_{t\geq 0}t^\alpha \|\xi(t)\|\leq r,
\]
which shows that the trivial solution of \eqref{NewSystem} is Mittag-Leffler stable. The proof is complete.
\end{proof}
\begin{remark}
In \cite[Theorem 3.1]{CST_16}, we proved that the trivial solution to \eqref{nonLinear} is asymptotically stable. However, we did not know decay rate of non-trivial solutions to this equation. Now by Theorem \ref{main result} this question is answered fully. Namely, in the proof of Theorem \ref{main result} we showed the convergence rate of solutions around the equilibrium as $t^{-\alpha}$.
\end{remark}
\begin{remark}
In the case $A=0$, we can not use the linearization method around an equilibrium point to analyze the Mittag-Leffler stability of \eqref{nonLinear}. To overcome this obstacle, in Section~\ref{sec.second-Lyapunov} we will develop the Lyapunov's second method for fractional differential equations.
\end{remark}
\section{Lyapunov's second method and Mittag-Leffler stability}\label{sec.second-Lyapunov}
In this section, we develop a Lyapunov's second method for fractional-order systems. Our approach is based on a comparison principle for FDE and an inequality concerning with fractional derivative of convex functions. For this purpose, we need the following preparation results. 
\begin{lemma}\label{lemma22}
Let $m:[0,T]\rightarrow \R$ be continuous and Caputo derivative $^{C}D^\alpha_{0+}m$ exists on the interval $(0,T]$. If there exists $t_0\in (0,T]$ such that
\[
m(t)\leq 0\quad \forall t\in [0,t_0)\quad\text{and}\quad m(t_0)=0,
\]
then $^{C}D^\alpha_{0+}m(t_0)\geq 0$.
\end{lemma}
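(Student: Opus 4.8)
The plan is to reduce everything to the explicit pointwise representation of the Caputo derivative furnished by Theorem~\ref{Theorem1}. Evaluated at the single point $t_0$, that formula reads
\[
^{C}D^\alpha_{0+}m(t_0)=\frac{m(t_0)-m(0)}{\Gamma(1-\alpha)\,t_0^\alpha}+\frac{\alpha}{\Gamma(1-\alpha)}\int_0^{t_0}\frac{m(t_0)-m(\tau)}{(t_0-\tau)^{\alpha+1}}\,d\tau ,
\]
and, because $\alpha\in(0,1)$ forces $\Gamma(1-\alpha)>0$ while $t_0>0$ forces $t_0^\alpha>0$, the two multiplicative constants $1/(\Gamma(1-\alpha)t_0^\alpha)$ and $\alpha/\Gamma(1-\alpha)$ are both strictly positive. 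So the entire proof amounts to showing that each of the two summands on the right-hand side is $\ge 0$. This is the exact fractional-order analogue of the elementary fact that a function which is $\le 0$ on $[0,t_0)$ and vanishes at $t_0$ has nonnegative (left) derivative at $t_0$.

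For the sign of the first summand I would simply use the two hypotheses $m(t_0)=0$ and $m(0)\le 0$ — the latter being immediate since $0\in[0,t_0)$ — so that $m(t_0)-m(0)=-m(0)\ge 0$. For the integral term I would again substitute $m(t_0)=0$, turning the integrand into $-m(\tau)/(t_0-\tau)^{\alpha+1}$; on the whole range of integration $\tau\in[0,t_0)$ the hypothesis gives $m(\tau)\le 0$ while $(t_0-\tau)^{\alpha+1}>0$, so the integrand is nonnegative pointwise and hence the integral is $\ge 0$. Adding the two nonnegative quantities yields $^{C}D^\alpha_{0+}m(t_0)\ge 0$, which is the assertion.

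The one delicate point — and the only real obstacle — is the applicability of the representation formula: as stated, Theorem~\ref{Theorem1} requires $^{C}D^\alpha_{0+}m$ to be continuous on all of $[0,T]$, whereas here we are only given that it exists on $(0,T]$. I would handle this either by passing to the restriction of $m$ to $[0,t_0]$ and arguing there, or, more robustly, by observing that the displayed identity is genuinely a pointwise statement: it holds at any $t_0$ at which $I^{1-\alpha}_{0+}(m-m(0))$ is differentiable, provided $m$ is continuous, which is exactly what the existence of $^{C}D^\alpha_{0+}m(t_0)$ together with continuity of $m$ provides. Establishing this local version of Theorem~\ref{Theorem1} (by repeating the relevant computation at the single point $t_0$) is the part that must be written out carefully; everything after it is the one-line sign check above.
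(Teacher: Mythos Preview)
Your argument is correct and is precisely the standard proof that the paper invokes via the citation to \cite[Lemma~2.1]{vastala}: insert the hypotheses $m(t_0)=0$ and $m(\tau)\le 0$ into the pointwise representation of $^{C}D^\alpha_{0+}m(t_0)$ from Theorem~\ref{Theorem1} and read off that both summands are nonnegative. The regularity mismatch you flag between Theorem~\ref{Theorem1} (which asks for $^{C}D^\alpha_{0+}m\in C([0,T])$) and the lemma's weaker hypothesis is genuine; your second proposed fix---establishing the identity pointwise at $t_0$---is the correct route, whereas merely restricting $m$ to $[0,t_0]$ does not by itself supply the missing continuity of $^{C}D^\alpha_{0+}m$ at $0$.
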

\begin{proof}
Using the same arguments as in \cite[Lemma 2.1]{vastala}
\end{proof}
Based on arguments as in \cite[Theorem 2.3]{vastala}, we obtain the following comparison proposition.
\begin{proposition}\label{comparision}
Let $L:\R\rightarrow \R$ be continuous and non-increasing (it means that for $x_1\leq x_2$ then $L(x_1)\geq L(x_2)$, $m_1:[0,T]\rightarrow \R$, $m_2:[0,T]\rightarrow \R$ be continuous. Assume that $^{C}D^\alpha_{0+}m_1$, $^{C}D^\alpha_{0+}m_1$ exist on $(0,T]$. If \begin{align}
 ^{C}D^\alpha_{0+}m_1(t)&\geq L(m_1(t)),\quad t\in(0,T],\quad m_1(0) \geq m_0,\label{inequal}\\
 ^{C}D^\alpha_{0+}m_2(t)&\leq L(m_2(t)),\quad t\in(0,T],\quad m_2(0) \leq m_0,\label{inequal1}
\end{align}
then $m_1(t)\geq m_2(t)$ for all $t\in (0,T]$.
\end{proposition}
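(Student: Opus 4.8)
The plan is to argue by contradiction, following the classical comparison-principle strategy adapted to the Caputo setting via Lemma~\ref{lemma22}. Suppose the conclusion fails, so that the set $S:=\{t\in(0,T]: m_1(t)<m_2(t)\}$ is nonempty. Since $m_1,m_2$ are continuous and $m_1(0)\ge m_0\ge m_2(0)$, the function $m:=m_1-m_2$ is continuous with $m(0)\ge 0$; hence there is a first time at which $m$ vanishes before becoming negative. More precisely, I would set $t_1:=\inf S$. Continuity gives $m(t)\ge 0$ on $[0,t_1]$ and $m(t_1)=0$ (if $t_1=0$ one uses $m(0)\ge 0$ together with right-continuity to find a slightly later vanishing point; the standard trick is to work with $m_\eps(t):=m_1(t)-m_2(t)+\eps$ for small $\eps>0$ and let $\eps\to 0$ at the end, but let me first describe the cleaner case where a genuine first zero $t_0\in(0,T]$ of $m$ with $m<0$ immediately to its right exists).

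Assume then that there is $t_0\in(0,T]$ with $m(t)\ge 0$ for $t\in[0,t_0)$ — in fact I want $m(t)\le 0$ near $t_0$ to apply Lemma~\ref{lemma22}, so I should instead look at where $m$ \emph{becomes} negative: pick $t_0$ to be the infimum of $S$, note $m(t_0)=0$, and observe that on a left-neighborhood the sign need not be controlled — this is exactly why one applies Lemma~\ref{lemma22} to the auxiliary function. Concretely, I would apply Lemma~\ref{lemma22} to the continuous function $\tilde m:=m_2-m_1$, which satisfies $\tilde m(t)\le 0$ for $t\in[0,t_0)$ — here $t_0$ is chosen as the first time $\tilde m$ returns to $0$ after having been $\le 0$, which exists because $\tilde m(0)=m_2(0)-m_1(0)\le 0$ and, by the contradiction hypothesis, $\tilde m$ is strictly positive somewhere; by the intermediate value theorem there is such a $t_0$ with $\tilde m(t_0)=0$ and $\tilde m\le 0$ on $[0,t_0)$. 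Then Lemma~\ref{lemma22} yields $^{C}D^\alpha_{0+}\tilde m(t_0)\ge 0$, i.e. $^{C}D^\alpha_{0+}m_2(t_0)\ge {}^{C}D^\alpha_{0+}m_1(t_0)$.

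Now I bring in the differential inequalities. Combining \eqref{inequal} and \eqref{inequal1} at $t=t_0$ with the inequality just derived,
\[
L(m_2(t_0))\ \ge\ {}^{C}D^\alpha_{0+}m_2(t_0)\ \ge\ {}^{C}D^\alpha_{0+}m_1(t_0)\ \ge\ L(m_1(t_0)).
\]
But $m_1(t_0)=m_2(t_0)$ by construction, so this chain collapses to $L(m_1(t_0))\ge {}^{C}D^\alpha_{0+}m_1(t_0)\ge L(m_1(t_0))$, forcing $^{C}D^\alpha_{0+}m_1(t_0)={}^{C}D^\alpha_{0+}m_2(t_0)$ — no contradiction yet. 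This is the crux, and it is why the monotonicity of $L$ and the $\eps$-perturbation are both needed. The fix: replace $m_1$ by $m_1^\eps$ solving the strict inequality, or directly perturb \eqref{inequal} to $^{C}D^\alpha_{0+}m_1(t)\ge L(m_1(t))+\eps(\text{something positive})$; more cleanly, run the argument with $\tilde m_\eps:=m_2-m_1-\eps$, which is $\le -\eps<0$ at $t=0$ and, if the conclusion fails, positive somewhere, giving a first zero $t_0^\eps$ with $\tilde m_\eps\le 0$ on $[0,t_0^\eps)$, hence $m_2(t_0^\eps)=m_1(t_0^\eps)+\eps>m_1(t_0^\eps)$. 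Lemma~\ref{lemma22} gives $^{C}D^\alpha_{0+}m_2(t_0^\eps)\ge {}^{C}D^\alpha_{0+}m_1(t_0^\eps)$, and now the chain reads $L(m_2(t_0^\eps))\ge {}^{C}D^\alpha_{0+}m_2(t_0^\eps)\ge{}^{C}D^\alpha_{0+}m_1(t_0^\eps)\ge L(m_1(t_0^\eps))$; since $m_2(t_0^\eps)=m_1(t_0^\eps)+\eps>m_1(t_0^\eps)$ and $L$ is non-increasing, $L(m_2(t_0^\eps))\le L(m_1(t_0^\eps))$, so all terms are equal — still delicate. The genuine contradiction comes from strictness: if one arranges a \emph{strict} inequality somewhere (e.g. by instead perturbing the equation for $m_1$, replacing $L(m_1)$ by $L(m_1)-\eps$, solving ${}^{C}D^\alpha m_1^\eps = L(m_1^\eps)-\eps$ with $m_1^\eps(0)=m_0$, showing $m_1^\eps\to m_1$ as $\eps\to0$, and noting $^{C}D^\alpha m_1^\eps(t_0)=L(m_1^\eps(t_0))-\eps < L(m_1^\eps(t_0))\le L(m_2(t_0))$), then the chain becomes strict and contradicts $^{C}D^\alpha_{0+}m_2(t_0)\le L(m_2(t_0))$. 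I would structure the final write-up around that last perturbation, then pass to the limit $\eps\to 0^+$ using continuous dependence to recover $m_1(t)\ge m_2(t)$ on $(0,T]$.

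The main obstacle, as the discussion above makes explicit, is that the bare non-strict inequalities \eqref{inequal}–\eqref{inequal1} plus Lemma~\ref{lemma22} produce only an equality chain at the critical point, not a contradiction; the real work is setting up the right strictly-perturbed auxiliary problem (and verifying solvability and continuous dependence of $m_1^\eps$ on $\eps$, which is where one invokes the fractional existence/uniqueness theory of Section~\ref{pre}) so that the comparison becomes strict. The monotonicity hypothesis on $L$ is exactly what makes the sign of $L(m_2(t_0))-L(m_1(t_0))$ go the favorable way once $m_2(t_0)>m_1(t_0)$, and it must be used precisely at that step.
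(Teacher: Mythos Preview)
Your overall strategy is right and matches the paper's: first handle the case of one strict inequality via Lemma~\ref{lemma22} and a first-crossing-time argument, then reduce the non-strict case to the strict one by an $\varepsilon$-perturbation. You also correctly diagnose why naive perturbations such as $\tilde m_\varepsilon = m_2-m_1-\varepsilon$ fail (the chain collapses to equalities).

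The genuine gap is in your final perturbation. You propose to define $m_1^\varepsilon$ as the \emph{solution} of ${}^{C}D^\alpha_{0+}m_1^\varepsilon = L(m_1^\varepsilon)-\varepsilon$, $m_1^\varepsilon(0)=m_0$, and then to show $m_1^\varepsilon\to m_1$ by continuous dependence. This does not work for two reasons. First, $L$ is only assumed continuous and non-increasing, not Lipschitz, so the existence/uniqueness theory of Section~\ref{pre} is unavailable and you cannot guarantee such an $m_1^\varepsilon$ exists (let alone is unique). Second, and more fundamentally, $m_1$ is not a solution of ${}^{C}D^\alpha_{0+}m_1 = L(m_1)$ but only satisfies the \emph{inequality} \eqref{inequal}; hence even if $m_1^\varepsilon$ existed and converged, its limit would be a solution of the equation, not $m_1$. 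So the claimed convergence $m_1^\varepsilon\to m_1$ is false in general, and the argument cannot close.

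The paper's fix avoids solving any auxiliary equation: set $m_1^\varepsilon(t):=m_1(t)+\varepsilon\lambda(t)$ with $\lambda(t)=E_\alpha(t^\alpha)$, an explicit positive function satisfying ${}^{C}D^\alpha_{0+}\lambda=\lambda$. Then ${}^{C}D^\alpha_{0+}m_1^\varepsilon = {}^{C}D^\alpha_{0+}m_1+\varepsilon\lambda \ge L(m_1)+\varepsilon\lambda$, and since $m_1^\varepsilon>m_1$ and $L$ is non-increasing one has $L(m_1)\ge L(m_1^\varepsilon)$, giving the strict inequality ${}^{C}D^\alpha_{0+}m_1^\varepsilon > L(m_1^\varepsilon)$. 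Now the strict case applies to $m_1^\varepsilon$ versus $m_2$, yielding $m_2<m_1^\varepsilon$, and letting $\varepsilon\to 0$ is trivial because $m_1^\varepsilon\to m_1$ pointwise by construction. This is precisely the step where the monotonicity of $L$ is used, and it requires neither Lipschitz continuity of $L$ nor any continuous-dependence machinery.
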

\begin{proof}
Assume first without loss of generality that one of the inequalities in \eqref{inequal} and \eqref{inequal1} is strict,
say $^{C}D^\alpha_{0+}m_2(t)<L(m_2(t))$ and $m_2(0)<m_0\leq m_1(0)$. We will show that
$m_2(t)<m_1(t)$ for all $t\in [0,T]$. Suppose, to the contrary, that there exists $t_0$ such that $0 < t_0 \leq T$ for which
$m_2(t_0)=m_1(t_0)$ and $m_2(t)<m_1(t)$ for all $t\in [0,t_0)$. Set $m(t)=m_2(t)-m_1(t)$ it follows that $m(t_0)=0$ and $m(t)<0$ for $t\in [0,t_0)$. Then by
hypothesis and Lemma \ref{lemma22} we have that $^{C}D^\alpha_{0+}m(t_0)\geq 0$. Therefore, since
 $m_2(t_0)=m_1(t_0)$ we get
$$
L(m_2(t_0))>\; ^{C}D^\alpha_{0+}m_2(t_0)\geq\; ^{C}D^\alpha_{0+}m_1(t_0)\geq L(m_1(t_0)) = 
L(m_2(t_0)),
$$
which is a contradiction. Consequently, $m_2(t)<m_1(t)$ for all $t\in [0,T]$.
Now assume that the inequalities in \eqref{inequal} are non-strict. We will show that $m_2(t)\leq m_1(t)$ for all $t\in [0,T]$. Set $m_1^\varepsilon (t)=m_1(t)+\varepsilon \lambda (t)$ where $\varepsilon>0$ and $\lambda(t)=E_\alpha(t^\alpha)$. Noting that $\lambda(\cdot)$ is positive, we have $m_1^\varepsilon(t)>m_1(t)$ and, since $L(\cdot)$ is non-increasing 
\begin{align}\label{temp}
\notag ^{C}D^\alpha_{0+} m_1^\varepsilon (t)&=\;^{C}D^\alpha_{0+}m_1(t)+\varepsilon \lambda(t)\\
\notag &\;\geq L(m_1(t))+\varepsilon \lambda(t)\\
\notag &=\;L(m_1^\varepsilon (t))+L(m_1(t))-L(m_1^\varepsilon (t))+\varepsilon \lambda(t)\\
\notag &\;\geq L(m_1^\varepsilon (t))+\varepsilon \lambda(t)\\
&>\;L(m_1^\varepsilon (t)),\quad t\in(0,T].
\end{align}
Due to \eqref{temp}, applying now the result above for strict inequalities to $m_1^\varepsilon(t)$ and $m_2(t)$, we get that $m_2(t)<m_1^\varepsilon(t)$ for all $t\in[0,T]$.
Consequently, letting  $\varepsilon\to 0$ we get that $m_2(t)\leq m_1(t)$ for all $t\in [0,T]$. The proof is complete.
\end{proof}
\begin{remark}
Proposition \ref{comparision} improved \cite[Theorem 2.3]{vastala} in the way that we do not need to require continuous differentiability of $m_1(\cdot), m_2(\cdot)$, and Lipschitz property of $L(\cdot)$. This  improvement is very useful for our purpose in the next steps.
\end{remark}
\subsection{Lyapunov's second method for fractional differential equations}

Let $D\subset \R^d$ is an open set and $0\in D$. Consider the following fractional order equation with the order $\alpha\in (0,1)$:
\begin{equation}\label{Eq_Ex}
^{C}D^\alpha_{0+}x(t)=f(x(t)),\quad \text{for}\; t\in (0,\infty),
\end{equation}
where $f:D\rightarrow \R^d$ satisfies the following conditions:
\begin{itemize}
\item[(f.1)] $f(0)=0$;
\item[(f.2)] the function $f(\cdot)$ is Lipchitz continuous in a neighborhood of the origin.
\end{itemize}
The main result in this section is the following theorem.
\begin{theorem}[Lyapunov's second method for Mittag-Leffler stability]\label{main_res}
Consider the equation \eqref{Eq_Ex}. Assume there is a function $V:\R^d\rightarrow \R_+$ satisfying
\begin{itemize}
\item[(V.1)] the function $V$ is convex and differentiable on $\R^d$;
\item[(V.2)] there exist constants $a,b,C_1,C_2,r>0$ such that $$C_1\|x\|^a\leq V(x)\leq C_2 \|x\|^b$$
for all $x\in B_{\R^d}(0,r)$; 
\item[(V.3)] there are constants $C_3, c\geq 0$ such that 
$$\langle \nabla V(x),f(x)\rangle\leq -C_3\|x\|^c$$
for all $x\in B_{\R^d}(0,r)$.
\end{itemize}
Then,
\begin{itemize}
\item[(a)] the trivial solution of \eqref{Eq_Ex} is stable if $C_3= 0$;
\item[(b)] the trivial solution of \eqref{Eq_Ex} is Mittag-Leffler stable if $C_3>0$.
\end{itemize}
\end{theorem}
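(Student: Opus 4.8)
The plan is to run the fractional version of Lyapunov's direct method, where the role played by the chain rule in the integer‑order case is taken over by the inequality for the Caputo derivative of a convex function, namely $^{C}D^\alpha_{0+}V(y(t))\leq\langle\nabla V(y(t)),{}^{C}D^\alpha_{0+}y(t)\rangle$ whenever $y$ has a continuous Caputo derivative (this follows from convexity of $V$ together with Theorem~\ref{Theorem1}; recall that a function convex and differentiable everywhere on $\R^{d}$ is automatically $C^{1}$), and the role of monotonicity arguments is taken over by the comparison principle, Proposition~\ref{comparision}. Fix $x_{0}\neq 0$ with $\|x_{0}\|$ small, let $\Phi(\cdot,x_{0})$ be the unique solution of \eqref{Eq_Ex} with $\Phi(0,x_{0})=x_{0}$ on its maximal interval $[0,T_{b}(x_{0}))$ (existence and uniqueness by (f.1)--(f.2) and Theorems~\ref{exist_result_1}--\ref{exist_result_2}, after shrinking $r$ so that $\overline{B_{\R^{d}}(0,r)}\subset D$, so that $f$ is Lipschitz there, and so that (V.1)--(V.3) hold on $B_{\R^{d}}(0,r)$), and set $v(t):=V(\Phi(t,x_{0}))$. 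Then, for every $t$ with $\Phi(t,x_{0})\in B_{\R^{d}}(0,r)$,
\[
{}^{C}D^\alpha_{0+}v(t)\leq\langle\nabla V(\Phi(t,x_{0})),f(\Phi(t,x_{0}))\rangle\leq -C_{3}\|\Phi(t,x_{0})\|^{c}\leq 0 .
\]

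The first step is to establish stability, which yields (a) outright and the "stable'' half of (b). Applying Proposition~\ref{comparision} with $L\equiv 0$, $m_{1}\equiv v(0)$, $m_{2}\equiv v$ gives $v(t)\leq v(0)$ for as long as $\Phi(\cdot,x_{0})$ stays in $B_{\R^{d}}(0,r)$, hence by (V.2) $\|\Phi(t,x_{0})\|^{a}\leq v(0)/C_{1}\leq(C_{2}/C_{1})\|x_{0}\|^{b}$. A standard continuation argument then shows that if $\|x_{0}\|<(C_{1}r^{a}/C_{2})^{1/b}$ the trajectory never reaches $\partial B_{\R^{d}}(0,r)$, so by Theorem~\ref{exist_result_2} one has $T_{b}(x_{0})=\infty$ and the above bound holds for all $t\geq 0$; given $\eps>0$, choosing $\delta$ with $(C_{2}/C_{1})^{1/a}\delta^{b/a}<\min\{\eps,r\}$ gives $\|\Phi(t,x_{0})\|<\eps$ for all $t\geq 0$ and all $\|x_{0}\|<\delta$. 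For $C_{3}=0$ this is precisely (a).

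The second step is the decay estimate when $C_{3}>0$. The trajectory now remains in $B_{\R^{d}}(0,r)$ for all $t\geq 0$, so (V.2) gives $\|\Phi(t,x_{0})\|^{c}\geq(v(t)/C_{2})^{\gamma}$ with $\gamma:=c/b$, whence $^{C}D^\alpha_{0+}v(t)\leq-\widetilde C\,v(t)^{\gamma}$ with $\widetilde C:=C_{3}/C_{2}^{\gamma}>0$, for all $t\geq 0$. If $\gamma\leq 1$, shrink $\delta$ so that $v(0)=V(x_{0})<1$; then $0\leq v(t)\leq v(0)<1$ forces $v(t)^{\gamma}\geq v(t)$, so $^{C}D^\alpha_{0+}v(t)\leq-\widetilde C\,v(t)$, and comparing (Proposition~\ref{comparision} with $L(s)=-\widetilde C s$) with the solution $w(t)=v(0)E_\alpha(-\widetilde C t^{\alpha})$ of $^{C}D^\alpha_{0+}w=-\widetilde C w$ gives $v(t)\leq v(0)E_\alpha(-\widetilde C t^{\alpha})$, which is $O((1+t)^{-\alpha})$ by the asymptotics of the Mittag-Leffler function. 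If $\gamma>1$, take instead the explicit candidate $m(t)=N(1+t)^{-\alpha/\gamma}$ and verify that for $N$ large enough $^{C}D^\alpha_{0+}m(t)\geq-\widetilde C\,m(t)^{\gamma}$ for all $t\geq 0$; then Proposition~\ref{comparision} (with $L(s)=-\widetilde C s^{\gamma}$ for $s\geq 0$, $L(s)=0$ for $s<0$, and $N\geq v(0)$) gives $v(t)\leq N(1+t)^{-\alpha/\gamma}$. In either case $v(t)\leq M_{0}(1+t)^{-\alpha/\max\{1,\gamma\}}$ with $M_{0}$ uniform over $\|x_{0}\|\leq\delta$ (choose $\delta$ to control $v(0)$), and then (V.2) yields $\|\Phi(t,x_{0})\|\leq(M_{0}/C_{1})^{1/a}(1+t)^{-\alpha/(a\max\{1,\gamma\})}$, so
\[
\sup_{t\geq 0}t^{\beta}\|\Phi(t,x_{0})\|\leq(M_{0}/C_{1})^{1/a}=:m,\qquad\beta:=\frac{\alpha}{a\max\{1,\gamma\}},
\]
which is Mittag-Leffler stability in the sense of Definition~\ref{dfn.MLstability}; note that $a\geq 1$ (a consequence of convexity of $V$ with $V(0)=0$) makes $\beta\leq\alpha$, consistently with Theorem~\ref{thm.power-rate}.

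The hard part will be the claim, in the case $\gamma>1$, that $m(t)=N(1+t)^{-\alpha/\gamma}$ is a supersolution for \emph{all} $t\geq 0$, not merely asymptotically: the subtlety is that $^{C}D^\alpha_{0+}$ of a power‑decaying function does not inherit the decay rate but behaves like $t^{-\alpha}$. Concretely, writing $^{C}D^\alpha_{0+}[(1+t)^{-\alpha/\gamma}](t)=\frac{1}{\Gamma(1-\alpha)}\int_{0}^{t}(t-s)^{-\alpha}\frac{d}{ds}(1+s)^{-\alpha/\gamma}\,ds$ and splitting the integral at $s=t/2$ produces a bound $^{C}D^\alpha_{0+}[(1+t)^{-\alpha/\gamma}](t)\geq-K(1+t)^{-\alpha}$ with an explicit $K$, after which the supersolution inequality reduces to $NK\leq\widetilde C N^{\gamma}$, i.e. $N^{\gamma-1}\geq K/\widetilde C$ — arranged by taking $N$ large, the exponents of $(1+t)$ matching because $\gamma\cdot(\alpha/\gamma)=\alpha$. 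The remaining ingredients are routine: the convexity inequality and the existence of $^{C}D^\alpha_{0+}(V\circ\Phi)$ via Theorem~\ref{Theorem1}, and the continuation argument of the first step, where one must bear in mind that $^{C}D^\alpha_{0+}v\leq 0$ does \emph{not} make $v$ non‑increasing — which is exactly why Proposition~\ref{comparision} is used in place of a monotonicity argument.
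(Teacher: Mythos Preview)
Your proof is correct and follows the same overall scheme as the paper (convexity inequality $\Rightarrow$ differential inequality for $v=V\circ\Phi$ $\Rightarrow$ comparison with an explicit supersolution), but the comparison step is carried out differently. The paper does not split on $\gamma=c/b$: it directly invokes the Vergara--Zacher piecewise supersolution for $^{C}D^\alpha_{0+}y=-\widetilde C\,y^{\gamma}$, namely $w(t)=V(x_0)$ on $[0,t_1]$ and $w(t)=C\,t^{-\alpha/\gamma}$ on $[t_1,\infty)$ with an explicit $t_1$, and then applies Proposition~\ref{comparision}. Your argument is more self-contained: for $\gamma\le 1$ you bootstrap $v^{\gamma}\ge v$ (valid since $v<1$) to reduce to the linear comparison with $E_\alpha(-\widetilde C t^{\alpha})$, which immediately yields the optimal rate $t^{-\alpha}$ allowed by Theorem~\ref{thm.power-rate}; for $\gamma>1$ you manufacture the smooth supersolution $N(1+t)^{-\alpha/\gamma}$ by the splitting estimate $|{}^{C}D^\alpha_{0+}(1+t)^{-\alpha/\gamma}|\le K(1+t)^{-\alpha}$ and choose $N^{\gamma-1}\ge K/\widetilde C$. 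The trade-off is that the paper's proof is shorter by outsourcing the supersolution to \cite{zacher}, whereas yours avoids the external reference and, in the sublinear regime $\gamma\le 1$, yields a sharper decay exponent on $v$ (and hence on $\|\Phi\|$) than the $t^{-\alpha/\gamma}$ one would read off from the Vergara--Zacher construction.
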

\begin{proof}
\noindent (a) See the proof of \cite[Theorem 3(a)]{Tuan_Hieu}.\smallskip

\noindent (b) Due to the fact that the trivial solution to \eqref{Eq_Ex} is stable, for any $\varepsilon>0$ 
 there exists $\delta>0$ such that every solution $\varphi(t,x_0)$ to \eqref{Eq_Ex} with $\|x_0\|<\delta$ satisfies $\|\varphi(t,x_0)\|<\varepsilon$ for all $t\geq 0$. Moreover, from \cite[Theorem 2]{Tuan_Hieu} and the conditions (V.2) and (V.3), we have
\begin{align*}
^{C}D^\alpha_{0+}V(\varphi(t,x_0))&\leq \langle\nabla V(\varphi(t,x_0)),^{C}D^\alpha_{0+}\varphi(t,x_0)\rangle\\
&\leq -C_3\|\varphi(t,x_0)\|^c\\
&\leq -\frac{C_3}{C_2^{c/b}}(V(\varphi(t,x_0)))^{c/b},\quad \forall t\geq 0.
\end{align*}
Put $A:=-\frac{C_3}{C_2^{c/b}}$, $p:=\frac{c}{b}$ and consider the following initial value problem 
\begin{equation}\label{tam_1}
\begin{cases}
^{C}D^\alpha_{0+}y(t)=Ay^p(t),\quad t>0,\\
y(0)=V(x_0)>0.
\end{cases}
\end{equation}
Then $V(\varphi(\cdot,x_0))$ is a sub-solution of \eqref{tam_1} (for the definition of sub-solution see \cite{zacher}). Furthermore, from the construction of a super-solution to \eqref{tam_1} (see \cite[p. 333]{zacher}), we can find a super-solution $w$ of \eqref{tam_1} on $[0,\infty)$ defined by
\[
w(t)=\begin{cases}
V(x_0),\quad t\in[0,t_1],\\
Ct^{-\frac{\alpha}{p}},\quad t\geq t_1,
\end{cases}
\]
where $C=V(x_0)t_1^{\frac{\alpha}{p}}$ and
\[
t_1^\alpha=\frac{V(x_0)^{1-p}}{-A}\left(\frac{2^\alpha}{\Gamma(1-\alpha)}+\frac{\alpha}{p}\frac{2^{\alpha+\frac{\alpha}{p}}}{\Gamma(2-\alpha)}\right).
\]
Now using the comparison proposition \ref{comparision}, we obtain 
\[
V(\varphi(t,x_0))\leq w(t),
\quad \forall t\geq 0.
\]
This implies that for any $x_0\in B_{\R^d}(0,\delta)\setminus\{0\}$, there exists a constant $d>0$ such that
\[
\|\varphi(t,x_0)\|\leq \left(\frac{1}{C_1}V(\varphi(t,x_0))\right)^{1/a}\leq \left(\frac{d}{C_1(1+t^{\alpha/p})}\right)^{1/a}
\]
for all $t\geq 0$. Note that from the existence and uniqueness of the solution to \eqref{Eq_Ex}, if $x_0=0$ then $\varphi(\cdot,0)=0$. So, the trivial solution to the original system \eqref{Eq_Ex} is Mittag-Leffler stable. The proof is complete.
\end{proof}
\begin{remark}
 (i) Theorem \ref{main_res} is still true if we replace the condition of global convex and differentiable property (V.1) by a condition of local convex and differentiable property in a neighborhood of the origin.

 (ii) Theorem \ref{main_res} is a new contribution in the theory of Lyapunov's second method for fractional differential equations. It improves and strengthens a recent result by Tuan and Hieu \cite[Theorem 3]{Tuan_Hieu}. In particular, we removed the condition $c>b$ in the statement of \cite[Theorem 3(c)]{Tuan_Hieu}. Moreover, we obtained the Mittag-Leffler stability of the trivial solution instead of the weakly asymptotic stability.
\end{remark}

\subsection{Illustrative examples}

\begin{example}[Simple nonlinear one-dimensional FDE]\label{example2}
 Consider the nonlinear one-dimensional FDE of order $0<\alpha<1$ which is nonlinear of order $\beta\geq 1$:
\begin{equation}\label{Ex2}
^{C}D_{0+}^{\alpha}x(t)=f(x(t)), \quad x(0)=x_0,
\end{equation}
where
\begin{equation}\label{Ex2'}
f(x) :=  \begin{cases} -x^\beta, \quad\hbox{if}\quad x\geq 0,\\
|x|^\beta,\quad\hbox{if}\quad x<0.
\end{cases}
\end{equation}
It is obvious that the function $f(\cdot)$ is local Lipschitz continuous at the origin. Choosing the function $V(x)=x^2$, $x\in \R$. This function satisfies the conditions (V.1), (V.2) (with $C_1=C_2=1$ and $a=b=2$), and (V.3) (with $C_3=2$ and $c=1+\beta$). Thus, from Theorem \ref{main_res}, the trivial solution to \eqref{Ex2} is Mittag-Leffler stable. More precisely, from the proof of Theorem \ref{main_res}, we see that the non-trivial solutions of \eqref{Ex2} converge to the origin with the rate at least $t^{-\alpha/(1+\beta)}$ as $t\rightarrow\infty$. A special case of \eqref{Ex2} when $\beta=3$ was studied by Li \textit{et al.} \cite[Example 14]{Li_1}, Shen \textit{et al.} \cite[Remark 11]{Shen}, Zhou \textit{et al.} \cite{Zhou}, where they tried to prove asymptotic stability of \eqref{Ex2}. However, their proof is not correct, see Tuan and Hieu \cite[Remark 3]{Tuan_Hieu} for details. Our method now solves this problem completely: we showed that the trivial solution of \eqref{Ex2} is Mittag-Leffler stable, hence asymptotically stable.
\end{example}
\begin{example}[A more complicated nonlinear one-dimensional FDE]
Consider an equation in form
\begin{equation}\label{Ex4}
^{C}D^\alpha_{0+}x(t)=-x^3+g(x(t)),\quad t>0,
\end{equation}
where $g:\R\rightarrow \R$ is differentiable at the origin and satisfies
\[
g(0)=0,\quad\lim_{x\to 0}\frac{g(x)}{x^3}=0.
\]
\end{example}
Choosing the Lyapunov candidate function $V(x)=x^2$ for $x\in \R$ and $r>0$ such that
\[
2x(-x^3+g(x))\leq -x^4,\quad \forall x\in B_{\R}(0,r).
\] 
Then the conditions of Theorem \ref{main_res} are satisfied for $C_1=C_2=1$, $a=b=2$, $C_3=1$ and $c=4$. Thus, the trivial solution of \eqref{Ex4} is Mittag-Leffler stable.

\begin{example}[Higher dimensional nonlinear FDE]
Consider a two dimensional fractional-order nonlinear system
\begin{equation}\label{Ex5}
^{C}D^\alpha_{0+}x(t)=f(x(t)),\quad t>0,
\end{equation}
where $f(x)=(-x_1^3+x_2^4,-x_2^3-x_2x_1^2)^{\rm T}$ for any $x=(x_1,x_2)\in \R^2$. In this case we choose the Lyapunov candidate function $V(x)=\|x\|^2=x_1^2+x_2^2$ for $x=(x_1,x_2)\in \R^2$ and $r>0$ such that
\[
\langle(2x_1,2x_2),(-x_1^3+x_2^4,-x_2^3-x_1^2 x_2)\rangle\leq -x_1^4-x_2^4
\] 
for all $x=(x_1,x_2)\in B_{\R^2}(0,r)$. The function $V(\cdot)$ now satisfies the conditions $\textup{(V.1)}$, $\textup{(V.2)}$ and $\textup{(V.3)}$ in Theorem \ref{main_res} for $a=b=2$, $c=4$, $C_1=C_2=1$ and $C_3=1$. Hence, the trivial solution of \eqref{Ex5} is Mittag-Leffler stable.
\end{example}

\section{Relation between Lipschitz condition, stability and speed of decay, separation of trajectories to Caputo FDEs}\label{final_sec}

We first present here several examples of Caputo FDEs of  various kinds of stability to illustrate the stability notions given in Section~\ref{sec.power.rate}. It is obvious that Mittag-Leffler stability is stronger than asymptotic stability.
 
 \begin{example}[Linear autonomous FDE]
 Let us consider a linear autonomous FDE of order $\alpha\in (0,1)$:
\begin{equation}\label{Ex1}
^{C}D_{0+}^{\alpha}x(t)=Ax(t),\quad A= \hbox{diag}(a_1,\ldots, a_d), \quad a_i<0, i=1,\ldots, d.
\end{equation}
This FDE is solvable explicitly and and its solutions are of the form\break 
 $\hbox{diag}(E_\alpha(a_1t^\alpha),\ldots, E_\alpha(a_d t^\alpha))x_0, \; x_0\in\R^d$ (see Diethelm~\cite[Theorem 7.2]{Kai}).
It is easy to see that the trivial solution of \eqref{Ex1} is Mittag-Leffler stable and all non-trivial solutions have decay rate $t^{-\alpha}$.
 \end{example}
 
Unlike the linear autonomous case, solution to nonlinear FDEs may have decay rate smaller or bigger than order of the equations. The FDE \eqref{Ex2} treated in Example \ref{example2} is a nonlinear FDE with solutions decaying to 0 with rate slower that $t^{-\alpha}$. Actually we show in Example \ref{example2}, using Theorem \ref{main_res} that the decay rate of nontrivial solutions to the FDE \eqref{Ex2} is at least
$t^{-\alpha/(1+\beta)}$ as $t\rightarrow\infty$. An application of the result of Vergara and Zacher \cite[Theorem 7.1, p. 334]{zacher} shows that decay rate of nontrivial solutions to the FDE \eqref{Ex2} is $\alpha/\beta<\alpha$ for $\beta>1$. Our next example is a nonlinear FDE with nontrivial solutions having decay rate bigger than the order of the equation.

\begin{example}[Nonlinear one-dimensional FDE with non Lipschitz right-hand side]
Consider the nonlinear one-dimensional FDE of order $0<\alpha<1$ which is nonlinear of order $\beta\in (0,1)$:
\begin{equation}\label{Ex6}
^{C}D_{0+}^{\alpha}x(t)=f(x(t)), \quad x(0)=x_0,
\end{equation}
where
\begin{equation}\label{Ex6'}
f(x) :=  \begin{cases} -x^\beta, \quad\hbox{if}\quad x\geq 0,\\
|x|^\beta,\quad\hbox{if}\quad x<0.
\end{cases}
\end{equation}
It is worth mentioning that the function $f(\cdot)$ in right-hand side of the above FDE is continuous but non Lipschitzian in a neighborhood of the origin.

Let $x_0>0$, consider the FDE \eqref{Ex6} in the area $x\in (0,\infty)$. From Theorem \ref{exist_result_2}, the equation \eqref{Ex6} has a unique solution, denoted by $\varphi(\cdot,x_0)$, on the maximal interval of existence $[0,T_b)$. If $T_b(x_0)< \infty$, then $\liminf_{t\to T_b(x_0)-}\varphi(t,x_0)=0$ or $\limsup_{t\to T_b(x_0)-}\varphi(t,x_0)=\infty$ (see \cite[Proposition 1]{Feng-18a}). However, using Proposition \ref{comparision} and construction of a super-solution and a sub-solution to \eqref{Ex6} (see \cite[pp. 232--234]{zacher}), we have
\[
\limsup_{t\to T_b-}\varphi(t,x_0)\leq \frac{c_1}{1+T_b^{\alpha/\beta}}
\]
and
\[
\liminf_{t\to T_b-}\varphi(t,x_0)\geq \frac{c_2}{1+T_b^{\alpha/\beta}}
\]
for some $c_1,c_2>0$, a contradiction. Hence, $T_b=\infty$ and
\[
\frac{c_2}{1+t^{\alpha/\beta}}\leq \varphi(t,x_0)\leq \frac{c_1}{1+t^{\alpha/\beta}},\quad \forall t\geq 0.
\]
On the other hand, due to the specific form of $f$ in  \eqref{Ex6'}, if we multiply the solutions of \eqref{Ex6} with negative initial values by $-1$ then we get solutions of \eqref{Ex6} with the positive initial values,  and vice versa. Therefore, the solution of \eqref{Ex6} starting from $x_0\ne 0$ has decay rate as
$t^{-\gamma}$ with $\gamma = \alpha/\beta>\alpha$. This is different from the Lipschitz case (see Theorem~\ref{thm.power-rate}).

On the other hand, by a direct computation, we obtain a global solution of the initial value problem 
\[
\begin{cases}
^{C}D^\alpha_{0+}x(t)&=(x(t))^{\beta},\quad t>0.\\
x(0)&=0,
\end{cases}
\]
as $\varphi(t,0)=\left(\frac{\Gamma(1-\alpha)}{\frac{\alpha}{1-\beta}B(1-\alpha,\frac{\alpha}{1-\beta})}\right)^{1/(1-\beta)}t^{\alpha/(1-\beta)}$, where $\Gamma(\cdot)$ is Gamma function and $B(\cdot,\cdot)$ is Beta function. This implies that the trivial solution to \eqref{Ex6} is unstable.

A consequence of the  non-Lipschitz property at the origin of $f(\cdot)$ in this example is non-uniqueness of the solution: we have at least two solutions starting from the origin. This circumstance alone makes the system unstable although any solution starting from a point close to the origin but distinct from the origin tends to the origin with decay rate of  $t^{-\gamma}$.
\end{example}

Now we show that 
the Mittag-Leffler stability is strictly stronger than asymptotic stability. For this, we give below an example of  an asymptotically stable FDE  which is not Mittag-Leffler stable.
\begin{example}[Asymptotically stable nonlinear one-dimensional FDE which is not Mittag-Leffler stable]
 Consider a nonlinear one-dimensional FDE of order $0<\alpha<1$:
\begin{equation}\label{Ex3}
^{C}D_{0+}^{\alpha}x(t)=f(x(t)), \quad x(0)=x_0,
\end{equation}
where
\begin{equation}\label{Ex3'}
f(x) :=  \begin{cases} 
-e^{-1/x}x, &\quad\hbox{if}\quad x>0,\\
0, &\quad\hbox{if}\quad x=0,\\
-e^{1/x}x, &\quad\hbox{if}\quad x < 0.
\end{cases}
\end{equation}
Clearly $f(\cdot) \in C^2(-\infty,\infty)$. Therefore, by  \cite[Theorem 2]{Baleanu}
 the equation \eqref{Ex3} has a unique solution $x(\cdot)$ which exists globally on $\R_{\geq 0}$. 
 
 Fix some $x_0>0$. By \cite[Theorem 3.5]{Cong_Tuan_2016}, the solution of the FDE  \eqref{Ex3} cannot intersect the trivial solution, hence $x(t) >0$ for all $t\in\R_{\geq 0}$. 
 
 Now let $n\geq 2$ be an arbitrary integer.  Put $g(x):= -(n-1)!x^n$ on a neighborhood of 0 and extend it suitably to get $g(x) \leq f(x)$ on $(0,\infty)$. By Proposition \ref{comparision}, the solution $x(\cdot)$ of \eqref{Ex3} is bounded by the solution of the FDE 
 \begin{equation}\label{Ex3''}
 ^{C}D_{0+}^{\alpha}y(t)=g(y(t)), \quad y(0)=x_0.
 \end{equation}
 Using construction of a sub-solution by Vergara and Zacher \cite[pp. 332--334]{zacher}, we see that the solution $y(\cdot)$ of the FDE \eqref{Ex3''} has decay rate of $t^{-\alpha/n}$, hence the function $x(\cdot)$, which is bigger or equal to $y(\cdot)$, cannot converge faster than $t^{-\alpha/n}$. Since $n$ is arbitrary, $x(\cdot)$ cannot decay with power-rate. Thus, the trivial solution of \eqref{Ex3} is not Mittag-Leffler stable.
 
On the other hand, due to the fact that $f_{|(0,\infty)}\in C^2(0,\infty)$, using  \cite[Theorem 3.3]{Feng-18a}, we see that the solution $x(\cdot)$ of \eqref{Ex3} is  strictly decreasing on the interval $[0,\infty)$. Now we assume that there exist $\delta\in (0,1)$ such that $x(t)\geq \delta$ for all $t\geq 0$. Then,
\[
^{C}D^\alpha_{0+}x(t)\leq -e^{-1/\delta}x(t),\quad t>0.
\]
Using Proposition \ref{comparision}, we obtain
\[
x(t)\leq x_0 E_\alpha (-e^{-1/\delta}t)\to 0 \quad \textit{as}\quad t\to\infty,
\]
and we arrive at a contradiction. Consequently,   $x(\cdot)$ converges to $0$ as $t$ tends to $\infty$.  It is easily seen that 
this assertion is also true for the solution of \eqref{Ex3} starting from any $x_0<0$. 

Finally, since $f(\cdot) \in C^2(-\infty,\infty)$ the equation \eqref{Ex3} with the initial condition $x_0=0$ has the unique solution $x(t)\equiv 0$. Hence, the trivial solution of \eqref{Ex3} is asymptotically stable.  
\end{example}

To complete this section, we study the separation of trajectories of solutions to an one-dimensional FDE with local Lipschitz right-hand side defining on an interval $x\in(a,b)\subset \R$. We extend our previous result \cite[Theorem 3.5]{Cong_Tuan_2016} on separation of solution of one-dimensional FDE to this case. Let $-\infty \leq a<b\leq \infty$ and $f:[0,\infty)\times (a,b)\rightarrow \R$ be a continuous function and locally Lipschitz continuous with respect the second variable, that is, for any $T>0$ and any compact interval $K\subset (a,b)$ there exists a positive constant $L_{K,T}$ such that
\begin{equation}\label{Lip_cond}
|f(t,x)-f(t,y)|\leq L_{K,T}|x-y|,\quad \forall x,y\in K,\;t\in [0,T].
\end{equation}
Consider the equation 
\begin{equation}\label{l_eq}
^{C}D^\alpha_{0+}x(t)=f(t,x(t)),\quad t>0.
\end{equation}
Then, using the approach of \cite{Cong_Tuan_2016} we obtain the following result.
\begin{theorem}\label{separation}
Assume that the function $f(\cdot,\cdot)$ satisfies the condition \eqref{Lip_cond}. Then, for any pair of distinct points  $x_1, x_2\in (a,b)$ the solutions of the FDE  \eqref{l_eq} starting from $x_1$ and $x_2$, respectively, do not meet.
\end{theorem}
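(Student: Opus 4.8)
The plan is to argue by contradiction, following the strategy used in \cite{Cong_Tuan_2016}. Suppose $\varphi_1(\cdot)$ and $\varphi_2(\cdot)$ are solutions of \eqref{l_eq} with $\varphi_1(0)=x_1<x_2=\varphi_2(0)$ (after relabelling), and assume they meet at some first time $t^*>0$, i.e. $\varphi_1(t)<\varphi_2(t)$ for $t\in[0,t^*)$ and $\varphi_1(t^*)=\varphi_2(t^*)=:c$. Set $m(t):=\varphi_2(t)-\varphi_1(t)$, so that $m$ is continuous, $m(t)>0$ on $[0,t^*)$, $m(t^*)=0$, and by the equivalence in Lemma~\ref{equi_l} (applied on a compact subinterval where the solutions stay in a compact $K\subset(a,b)$) the Caputo derivative ${}^{C}D^\alpha_{0+}m$ exists and is continuous on $(0,t^*]$. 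First I would record that, by the reflected form of Lemma~\ref{lemma22} (applied to $-m$, which is $\le 0$ on $[0,t^*)$ and vanishes at $t^*$), we obtain ${}^{C}D^\alpha_{0+}m(t^*)\le 0$.

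Next I would compute ${}^{C}D^\alpha_{0+}m(t^*)$ directly from the two equations: ${}^{C}D^\alpha_{0+}m(t^*)=f(t^*,\varphi_2(t^*))-f(t^*,\varphi_1(t^*))=f(t^*,c)-f(t^*,c)=0$, so in fact ${}^{C}D^\alpha_{0+}m(t^*)=0$ — this alone is not yet a contradiction, so the crossing must be handled more delicately, exactly as in the scalar-separation argument of \cite{Cong_Tuan_2016}. The key is to use the explicit integral representation of the Caputo derivative from Theorem~\ref{Theorem1}: at the crossing time,
\[
0={}^{C}D^\alpha_{0+}m(t^*)=\frac{m(t^*)-m(0)}{\Gamma(1-\alpha)(t^*)^\alpha}+\frac{\alpha}{\Gamma(1-\alpha)}\int_0^{t^*}\frac{m(t^*)-m(\tau)}{(t^*-\tau)^{\alpha+1}}\,d\tau.
\]
Since $m(t^*)=0$, $m(0)=x_2-x_1>0$, and $m(\tau)>0$ for all $\tau\in[0,t^*)$, every term on the right-hand side is \emph{strictly negative}: the first term equals $-\,(x_2-x_1)/(\Gamma(1-\alpha)(t^*)^\alpha)<0$, and the integrand $-m(\tau)/(t^*-\tau)^{\alpha+1}$ is strictly negative on a set of positive measure. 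Hence the right-hand side is strictly negative, contradicting that it equals $0$. This yields the claim.

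A couple of technical points must be checked along the way. One must verify that on $[0,t^*]$ both solutions remain inside a fixed compact subinterval $K\subset(a,b)$ so that \eqref{Lip_cond} and Lemma~\ref{equi_l} apply and the integral defining the Caputo derivative converges; this is immediate from continuity of the solutions on the closed interval $[0,t^*]$. One also needs the integral representation of Theorem~\ref{Theorem1} to be valid for $m$, which holds because $m=\varphi_2-\varphi_1$ inherits the existence and continuity of its Caputo derivative from the two solutions, and hence satisfies condition~(i), therefore condition~(iii), of that theorem. Finally, if instead the two solutions were to "meet" only in the limit $t\to T_b-$ at a finite blow-up time without an actual crossing point, that scenario does not constitute meeting in the sense of the statement, so no separate treatment is needed.

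\textbf{Main obstacle.} The heart of the argument is the observation that the Caputo derivative at a first contact point of $m$ from above is not merely nonpositive (as the naive maximum-principle Lemma~\ref{lemma22} gives) but is forced to be strictly negative by the nonlocal memory term — and this is exactly what contradicts the value $0$ dictated by the Lipschitz vector field matching at the contact point. Making this rigorous requires the precise integral formula of Theorem~\ref{Theorem1} rather than the qualitative sign lemma, and care that the first-contact-time $t^*$ is well defined (it is, as the infimum of the closed nonempty set $\{t>0:\varphi_1(t)=\varphi_2(t)\}$, using continuity). Everything else is routine once the compact-interval reduction is in place.
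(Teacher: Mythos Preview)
Your argument is correct, and it takes a genuinely different route from the paper's proof. The paper does not use the integral representation from Theorem~\ref{Theorem1} at all; instead, after identifying the first contact time $t_1$ and a compact interval $[x_1-r_1,x_2+r_2]\subset(a,b)$ containing both trajectories on $[0,t_1]$, it \emph{extends} $f$ to a function $f_2$ on $[0,t_1]\times\R$ that is globally Lipschitz in the second variable (constant extension outside the compact interval), and then simply invokes the already-known global-Lipschitz separation result \cite[Theorem~3.5]{Cong_Tuan_2016} for the extended equation. Since the original solutions solve the extended equation on $[0,t_1]$, they cannot meet there --- contradiction.

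Your approach is more self-contained: it exploits directly the nonlocal structure of the Caputo derivative via the Vainikko formula, and the contradiction comes from the strictly negative boundary term $-(x_2-x_1)/(\Gamma(1-\alpha)(t^*)^\alpha)$ (the memory integral is $\le 0$ as well, but you do not even need its strict sign). In effect you are reproving, and slightly sharpening, what the paper outsources to \cite{Cong_Tuan_2016}; notably, your contradiction step never actually uses the Lipschitz bound on $f$ --- only continuity of $f$ (to get ${}^{C}D^\alpha_{0+}\varphi_i\in C$ and hence Theorem~\ref{Theorem1} for $m$) and the equality $f(t^*,c)-f(t^*,c)=0$. The Lipschitz hypothesis enters only through well-posedness. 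The paper's approach, by contrast, is more modular: it isolates the reduction ``local Lipschitz $\Rightarrow$ global Lipschitz via cutoff'' as the sole new ingredient, which is a clean way to state the improvement over \cite{Cong_Tuan_2016}. Two minor remarks on your write-up: the existence and continuity of ${}^{C}D^\alpha_{0+}m$ follows from linearity and the fact that each $\varphi_i$ solves \eqref{l_eq}, not really from Lemma~\ref{equi_l}; and your preliminary appeal to Lemma~\ref{lemma22} is superfluous once you pass to the exact integral formula.
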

\begin{proof}
By virtue Theorem \ref{exist_result_2}, for $x_i\in (a,b)$ the initial value problem \eqref{l_eq}, $x(0)=x_i$ ($i=1,2$), has the unique solution denoted by $\varphi(\cdot,x_i)$ on the maximal interval of existence $[0,T_b(x_i))$. Without loss of generality we let $x_1<x_2$. Assume that $\varphi(\cdot,x_1)$ and $\varphi(\cdot,x_2)$ meet at some $t\in (0,T_b(x_1))\cap (0,T_b(x_2))$. Let $t_1:=\inf\{t\in (0,T_b(x_1))\cap (0,T_b(x_2)): \varphi(t,x_1)=\varphi(t,x_2)\}$. It is obvious that $0<t_1<\min\{T_b(x_1), T_b(x_1)\}$ and
\[
\varphi(t_1,x_1)=\varphi(t_1,x_2),\quad \varphi(t,x_1)<\varphi(t,x_2),\quad \forall t\in [0,t_1).
\] 
Take $r_1,r_2>0$ such that $[x_1-r_1,x_2+r_2]\subset (a,b)$ and $\varphi(t,x_1),\varphi(t,x_2)\in [x_1-r_1,x_2+r_2]$ for all $t\in [0,t_1]$. Then following the assumption on the locally Lipschitz continuity of $f(\cdot,\cdot)$ (see the condition \eqref{Lip_cond}), the function
\[
f_1:=f_{|[0,t_1]\times [x_1-r_1,x_2+r_2]}
\]
is continuous and Lipschitz continuous with respect to the second variable on the set $[0,t_1]\times [x_1-r_1,x_2+r_2]$. 

Now we construct a extension of $f_1(\cdot,\cdot)$ as follows:
\[
f_2(t,x):=\begin{cases}
f_1(t,x),\quad \text{if}\quad (t,x)\in [0,t_1]\times [x_1-r_1,x_2+r_2],\\
f_1(t,x_2+r_2),\quad \text{if}\quad t\in [0,t_1],\;x>x_2+r_2,\\
f_1(t,x_1-r_1),\quad \text{if}\quad t\in [0,t_1],\;x<x_1-r_1.
\end{cases}
\]
This function is continuous and global Lipschitz continuous with respect to the second variable on the domain $[0,t_1]\times \R$. Therefore, by \cite[Theorem 2]{Baleanu} the FDE
\begin{equation}\label{l_1_eq}
^{C}D^\alpha_{0+}x(t)=f_2(t,x(t)),\quad t>0,\quad x(0)=x_i,\; i=1,2,
\end{equation}
has  unique solutions $\tilde{\varphi}(\cdot,x_i)$, $i=1,2$, on $\R_{\geq 0}$. On the other hand, using \cite[Theorem 3.5]{Cong_Tuan_2016}, we have
\[
\tilde{\varphi}(t,x_1)<\tilde{\varphi}(t,x_2),\quad \forall t\in \R_{\geq 0}.
\]
However, due to the fact $\varphi(t,x_1),\varphi(t,x_2)\in [x_1-r_1,x_2+r_2]$ for all $t\in [0,t_1]$, we also have
\begin{align*}
^{C}D^\alpha_{0+}\varphi(t,x_i)&=f(t,\varphi(t,x_i))\\
&=f_2(t,\varphi(t,x_i)),\quad t\in (0,t_1],\;i=1,2.
\end{align*}
This implies that
\[
\varphi(t,x_1)=\tilde{\varphi}(t,x_1)<\tilde{\varphi}(t,x_2)=\varphi(t,x_2)
\]
for all $t\in [0,t_1]$, a contradiction. Thus two solutions $\varphi(\cdot,x_1)$ and $\varphi(\cdot,x_2)$ do not meet and the proof is complete.
\end{proof}
\begin{remark}
 (i) Theorem \ref{separation} improves our preceding result \cite[Theorem 3.5]{Cong_Tuan_2016}. Here, we only used the assumption on the locally Lipschitz continuity of "vector field" $f(\cdot,\cdot)$ instead of the global Lipschitz continuity of this function.

(ii) This theorem also improved a recent result by Y. Feng \textit{et al.} \cite[proposition 2]{Feng-18a}. More precisely, we removed the condition on monotonity of the function $f(\cdot)$ in \cite[Proposition 2]{Feng-18a} (see also \cite[Remark 6]{Feng-18a}).
\end{remark}
\section*{Acknowledgments}
This research is funded by Vietnam National Foundation for Science and Technology Development (NAFOSTED) under grant number FWO.101.2017.01. The authors thank Prof. Doan Thai Son for helpful discussions.

\end{document}